\documentclass[11pt,reqno]{amsart}

\usepackage[T1]{fontenc}
\usepackage{lmodern}
\usepackage{amsmath,amssymb,mathtools}
\usepackage{microtype}
\usepackage{xcolor}
\usepackage{etoolbox}
\usepackage[pdfusetitle,colorlinks=true,linkcolor=blue!55!black,citecolor=blue!55!black,urlcolor=blue!55!black]{hyperref}
\usepackage{xurl}

\numberwithin{equation}{section}
\newtheorem{theorem}{Theorem}[section]
\newtheorem{proposition}[theorem]{Proposition}
\newtheorem{lemma}[theorem]{Lemma}
\newtheorem{corollary}[theorem]{Corollary}
\theoremstyle{remark}
\newtheorem{remark}[theorem]{Remark}

\DeclareMathOperator{\ord}{ord}
\DeclareMathOperator{\Res}{Res}
\DeclareMathOperator{\cont}{cont}
\DeclareMathOperator{\den}{den}
\newcommand{\e}{\mathrm e}
\newcommand{\Z}{\mathbb Z}
\newcommand{\Q}{\mathbb Q}
\newcommand{\C}{\mathbb C}
\newcommand{\Hh}{\mathbb H}

\newcommand{\ramanujantag}{To The Spirit of Ramanujan}
\makeatletter
\patchcmd{\@settitle}
  {\@title\end{center}}
  {\@title\par\vspace{8pt}%
   {\normalfont\normalsize\bfseries\itshape\ramanujantag\par}%
   \end{center}}
  {}
  {\PackageError{mod8-false-theta}{Unable to insert the Ramanujan title-page line}{}}
\makeatother

\title[Finite Fourier duality and radial values]
{Finite Fourier Duality, Radial Values, and Arithmetic of a Mod-Eight False-Theta Quotient}

\author{K. Srinivasa Raghava}
\address{Pie Mathematics Association}
\email{srinivasaraghavak@gmail.com}
\hypersetup{
 pdftitle={Finite Fourier Duality, Radial Values, and Arithmetic of a Mod-Eight False-Theta Quotient},
 pdfauthor={K. Srinivasa Raghava}
}

\subjclass[2020]{Primary 11F27, 11F37; Secondary 11M35, 11P82, 33D15}
\keywords{false theta function, periodic Dirichlet series, finite Fourier transform, radial asymptotics, cyclotomic value, 2-adic interpolation, Euler transform}

\begin{document}
\raggedbottom

\begin{abstract}
Let $T_k=k(k+1)/2$ and
\[
 W_n(q)=\frac{\sum_{k\geq0}(-1)^{T_k}(2k+1)^{2n+1}q^{T_k}}
 {\sum_{k\geq0}(-1)^{T_k}(2k+1)q^{T_k}}.
\]
We determine the radial behaviour of these mod-eight false-theta moment quotients at every root of unity.  The central result is an exact finite Fourier closure for the periodic boundary sequences.  At every zero-mean root this closure gives a completed functional equation and proves that all relevant negative odd periodic $L$-values are nonzero.  Hence the denominator is nonzero on a terminal radial segment, and the quotient has either universal normalized factorial asymptotics or a finite nonzero radial value, according to the mean.  The finite values lie in cyclotomic fields and satisfy an exact Galois covariance law; at the principal root they specialize to the signed odd Springer numbers.  We also determine the exact coefficient contents of $W_n-1$ and $W_m-W_n$ and extend the moments to a coefficientwise analytic family over $\mathbb Z_2$ with an optimal congruence modulus.  Finally, the denominator has exactly one zero in $|q|<1/2$.  This simple zero yields an unconditional coefficient decomposition, whose leading constant may vanish for a general moment.  Using the dominant zero together with exact boundary estimates and M\"obius inversion, we prove that every exponent in the formal Euler transform of the denominator is positive.
\end{abstract}

\maketitle

\section{Introduction}

False theta functions resemble theta functions but generally lack scalar
modular transformation laws.  Their boundary expansions involve periodic
$L$-values, while finite Fourier transforms govern many of their modular and
resurgent features; see
\cite{BringmannMilasI,BringmannNazaroglu,FolsomPeriodic,GoswamiOsburn,
HanLiSauzinSun,LawrenceZagier}.  We study the following rigid one-sided
mod-eight family.  Put
\[
 T_k:=\frac{k(k+1)}2,
 \qquad
 \chi_8(m):=\left(\frac{2}{m}\right),
\]
where $\chi_8$ is the primitive even quadratic character modulo $8$. For $n\geq0$ define
\begin{equation}\label{eq:defN}
 N_n(q):=\sum_{k\geq0}(-1)^{T_k}(2k+1)^{2n+1}q^{T_k},
 \qquad D(q):=N_0(q),
\end{equation}
and
\begin{equation}\label{eq:defW}
 W_n(q):=\frac{N_n(q)}{D(q)}.
\end{equation}
The series in \eqref{eq:defN} converge absolutely for $|q|<1$. Since $D(0)=1$, the formal quotient belongs to $\Z[[q]]$; analytically it is considered away from the zeros of $D$. We write $e(x)=e^{2\pi i x}$, $\ord(\zeta)$ for the order of a root of unity, $\nu_p$ for the normalized $p$-adic valuation, and $\den(x)$ for the positive denominator of a rational number.

The identity
\begin{equation}\label{eq:chiidentity}
 (-1)^{T_k}=\chi_8(2k+1)
\end{equation}
gives the odd-index representation
\begin{equation}\label{eq:oddform}
 N_n(q)=\sum_{\substack{m\geq1\\m\ \mathrm{odd}}}
 \chi_8(m)m^{2n+1}q^{(m^2-1)/8}.
\end{equation}
Because $\chi_8$ is even while $m^{2n+1}$ is odd, the corresponding
bilateral moment vanishes.  The motivation comes from Ramanujan's
triangular-moment quotients \cite{AndrewsBerndtI,BerndtNotebooks,RamanujanLost}.
The trace interpretation in \cite{AmdeberhanOnoSingh} concerns Ramanujan's
original $(-1)^k$ quotients, not the present mod-eight family.  Replacing
$(-1)^k$ by $(-1)^{T_k}$ changes the odd character modulo $4$ to the even
character modulo $8$; the resulting denominator has an interior zero, and
its boundary sequences form a small exact Fourier orbit.

For a root of unity $\zeta$, define the even periodic sequence
\begin{equation}\label{eq:defbzeta-intro}
 b_\zeta(m):=
 \begin{cases}
 \chi_8(m)\zeta^{(m^2-1)/8},&m\ \text{odd},\\
 0,&m\ \text{even}.
 \end{cases}
\end{equation}
Its mean is nonzero precisely when $\ord(\zeta)\equiv2\pmod4$.  At every
other root, write $-\zeta=e(u/h)$ with $h$ even and
$[u]\in(\Z/h\Z)^\times$, and set $b_{h,u}:=b_\zeta$.  With the
unnormalized Fourier transform on $\Z/(4h)\Z$, our structural theorem is
\begin{equation}\label{eq:introDFT}
 \widehat b_{h,u}=\gamma_h(u)b_{h,u^\vee},
 \qquad u^\vee\equiv-u^{-1}\pmod h,
 \qquad |\gamma_h(u)|=2\sqrt h.
\end{equation}
The parameter and phase are independent of the lifts used in the Gauss
sum, and Fourier squaring gives $(u^\vee)^\vee=u$ and
$\gamma_h(u)\gamma_h(u^\vee)=4h$.  The completed functional equation
attached to \eqref{eq:introDFT}, followed by a strict first-term estimate
on the dual positive value, proves
$L(-2n-1,b_{h,u})\ne0$ for every $n\geq0$.
The periodic $L$-function, Poisson summation, and Gauss-sum ingredients are
classical \cite{Alkan,BerndtPoisson,BerndtEvansWilliams,
BerndtSchoenfeld,IshibashiPeriodic}.  The new point is the exact closure
\eqref{eq:introDFT}.  We use only the normalized Fourier operator
corresponding to the $S$-operator, not a full metaplectic representation
\cite{GurevichHadaniHowe}.

This theorem chain gives the root-of-unity radial dichotomy.  As $t\to0^+$,
\begin{equation}\label{eq:introdichotomy}
 \left(\frac{t}{8}\right)^nW_n(\zeta\e^{-t})\longrightarrow n!
 \quad\text{if }\ord(\zeta)\equiv2\pmod4,
\end{equation}
whereas at every zero-mean root
\begin{equation}\label{eq:introfinite}
 W_n(\zeta\e^{-t})\longrightarrow
 A_n(h,u):=\frac{L(-2n-1,b_{h,u})}{L(-1,b_{h,u})}\neq0.
\end{equation}
The denominator is nonzero on a terminal part of each radial path.  The
finite values lie in $\Q(e(1/h))$ and satisfy the natural Galois covariance
law.  At $\zeta=1$, their exponential generating function is
$\sinh x/\cosh 2x$, so they specialize to the signed odd Springer numbers
\cite{ArnoldSpringer,Hoffman,SokalSpringer}; we also determine their exact
$2$-adic valuation.

Two companion strands study the same quotients away from the boundary.
For $n\geq1$, the first determines the exact content of $W_n-1$ as
$2^{3+\nu_2(n)}\prod_{\substack{p\ \mathrm{odd\ prime}\\p-1\mid2n}}p$,
computes the content of
every difference $W_m-W_n$, and constructs an optimal coefficientwise
analytic $\Z_2$-family.  The second proves that $D$ has exactly one zero in
$|q|<1/2$: a positive simple zero $q_0$ satisfying
$29/100<q_0<293/1000$.  An implicit radius $R_*>1/2$ then yields an
unconditional coefficient decomposition; the consecutive-coefficient
ratio for $W_n$ is asserted only when $N_n(q_0)\ne0$.  Exact boundary
estimates and M\"obius inversion further show that every exponent in the
formal Euler product of $D$ is positive.  The induced abstract-color
interpretation is only formal.  Related reciprocal questions in
\cite{JinXuYao,KeithReciprocals} concern different false theta functions.

The central chain is Fourier closure, functional equation, nonvanishing at
negative odd integers, and radial dichotomy.  Section~\ref{sec:structure} develops
the coefficientwise arithmetic, and Sections~\ref{sec:zeros}--\ref{sec:euler}
analyze the common denominator and its Euler exponents.  All limits are
strictly radial, $R_*$ is left implicit, and no quantum-modular law is
claimed.

\section{Periodic boundary data}\label{sec:cuspdata}

Let $\zeta$ be a root of unity and define $b_\zeta$ by
\eqref{eq:defbzeta-intro} for every integer $m$.  It is supported on the
odd integers.  If $\lambda:=-\zeta$ and $m=2k+1$, then
\begin{equation}\label{eq:b-lambda}
 b_\zeta(2k+1)=\lambda^{T_k}.
\end{equation}
If $h=\ord(\lambda)$, the sequence $b_\zeta$ is even and $4h$ is a valid
period; minimality will be proved in the zero-mean case.  For a periodic
sequence $b$ of period $M$, write
\[
 \mu_b:=\frac1M\sum_{r=1}^{M}b(r),
 \qquad
 L(s,b):=\sum_{m\geq1}\frac{b(m)}{m^s}\quad(\Re(s)>1).
\]
The standard Hurwitz-zeta decomposition \cite{Alkan,IshibashiPeriodic}
\begin{equation}\label{eq:Hurwitzdecomp}
 L(s,b)=M^{-s}\sum_{r=1}^{M}b(r)\zeta\left(s,\frac rM\right)
\end{equation}
continues $L(s,b)$ meromorphically to $\C$, with at most a simple pole at $s=1$ of residue $\mu_b$. We abbreviate $\mu_\zeta:=\mu_{b_\zeta}$.

\begin{proposition}\label{prop:generalBernoulli}
Let $b$ be even and periodic of period $M$, and suppose that $b(0)=0$.
In a neighbourhood of $x=0$,
\begin{equation}\label{eq:generalBernoulli}
 \sum_{n\geq0}L(-2n-1,b)\frac{x^{2n+1}}{(2n+1)!}
 =
 \frac{\mu_b}{x}
 -\frac{\sum_{r=1}^{M}b(r)e^{rx}}{e^{Mx}-1}.
\end{equation}
The apparent singularity at $x=0$ is removable.
\end{proposition}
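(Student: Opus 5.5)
I will reduce the identity to the classical values of the Hurwitz zeta function at non-positive integers, via the decomposition \eqref{eq:Hurwitzdecomp}.

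\emph{Step 1: values at negative integers.} I will use $\zeta(-k,a)=-B_{k+1}(a)/(k+1)$ for $k\ge0$. With $s=-k$ in \eqref{eq:Hurwitzdecomp} this gives
\[
 L(-k,b)=-\frac{M^{k}}{k+1}\sum_{r=1}^{M}b(r)B_{k+1}\!\left(\frac rM\right).
\]

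\emph{Step 2: generating function.} Insert the generating function of Bernoulli polynomials,
\[
 \frac{z e^{az}}{e^z-1}=\sum_{j\ge0}B_j(a)\frac{z^j}{j!},
\]
with $z=Mx$ and $a=r/M$. Summing against $b(r)$ yields
\[
 \frac{\sum_r b(r)e^{rx}}{e^{Mx}-1}=\sum_{j\ge0}\frac{M^{j-1}x^{j-1}}{j!}\sum_r b(r)B_j\!\left(\frac rM\right).
\]
The $j=0$ term equals $\frac{1}{Mx}\sum_r b(r)=\mu_b/x$, and it cancels the first term on the right of \eqref{eq:generalBernoulli}. This cancellation is exactly why the singularity at $x=0$ is removable; the remaining series is analytic for $|x|<2\pi/M$. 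For $j=k+1\ge1$, Step 1 turns the coefficient into $-L(-k,b)\,x^k/k!$. Hence
\[
 \frac{\mu_b}{x}-\frac{\sum_r b(r)e^{rx}}{e^{Mx}-1}=\sum_{k\ge0}L(-k,b)\frac{x^k}{k!}.
\]

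\emph{Step 3: parity.} It remains to show that $L(-k,b)=0$ for even $k\ge0$, so that only the odd powers $x^{2n+1}$ survive. Here I will use that $b$ is even and $b(0)=0$, which lets me rewrite the sum over $r=1,\dots,M$ as a sum over $r=0,\dots,M-1$ (since $b(M)=b(0)=0$). Then I apply the reflection $B_j(1-a)=(-1)^jB_j(a)$ and the substitution $r\mapsto M-r$, under which $b(M-r)=b(-r)=b(r)$. This gives
\[
 \sum_r b(r)B_j\!\left(\tfrac rM\right)=(-1)^j\sum_r b(r)B_j\!\left(\tfrac rM\right),
\]
so the sum vanishes for odd $j=k+1$, that is, for even $k$.

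Alternatively, and more cleanly, I can show that the function
\[
 F(x)=\frac{\mu_b}{x}-\frac{\sum_r b(r)e^{rx}}{e^{Mx}-1}
\]
is odd. Multiply the numerator and denominator of $F(-x)$ by $e^{Mx}$ and substitute $r\mapsto M-r$. Using evenness and $b(0)=b(M)$, this gives
\[
 \frac{\sum_r b(r)e^{-rx}}{e^{-Mx}-1}=-\frac{\sum_{r=0}^{M-1}b(r)e^{rx}}{e^{Mx}-1},
\]
and the right side equals $-\frac{\sum_{r=1}^{M}b(r)e^{rx}}{e^{Mx}-1}$ because $b(0)=b(M)=0$. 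Combined with $\mu_b/(-x)=-\mu_b/x$, this gives $F(-x)=-F(x)$.

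The main obstacle is the index bookkeeping in Step 3, specifically the endpoint terms $r=0$ and $r=M$. This is exactly where the hypothesis $b(0)=0$ is needed; without it the even-index values would not vanish.
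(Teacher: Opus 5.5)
Your proof is correct and is essentially the paper's argument, with the bookkeeping written out. You obtain the standard identity $\mu_b/x-\sum_{r=1}^{M}b(r)e^{rx}/(e^{Mx}-1)=\sum_{k\geq0}L(-k,b)x^k/k!$ from the Hurwitz values and the Bernoulli generating function, where the $j=0$ term cancels $\mu_b/x$ and removes the singularity, and then you pair $r$ with $M-r$ using evenness and $b(0)=b(M)=0$ to kill $L(-k,b)$ for even $k$, exactly as the paper does. (Your closing remark can be sharpened: without $b(0)=0$ only $L(0,b)=-b(0)/2$ would survive, since $B_j(0)=B_j(1)=0$ for odd $j\geq3$.)
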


\begin{proof}
The generalized Bernoulli generating function and the special values in
\eqref{eq:Hurwitzdecomp} give the standard identity
\[
 \frac{\mu_b}{x}
 -\frac{\sum_{r=1}^{M}b(r)e^{rx}}{e^{Mx}-1}
 =\sum_{j\geq1}L(1-j,b)\frac{x^{j-1}}{(j-1)!}.
\]
See, for example, \cite{Alkan,IshibashiPeriodic}.  Pairing the residues
$r$ and $M-r$ and using $b(0)=0$ shows that $L(-2j,b)=0$ for $j\geq0$.
Thus only odd powers remain, which proves \eqref{eq:generalBernoulli}; the
same identity removes the apparent singularity.
\end{proof}

\begin{theorem}\label{thm:meanphase}
Let $\lambda=-\zeta$ and $h=\ord(\lambda)$. Then
\[
 \mu_\zeta\neq0
 \quad\Longleftrightarrow\quad
 h\ \text{is odd}
 \quad\Longleftrightarrow\quad
 \ord(\zeta)\equiv2\pmod4.
\]
If $h>1$ is odd, write $\lambda=e(u/h)$ with $(u,h)=1$, and let $\overline2$ and $\overline8$ denote inverses modulo $h$. Then
\begin{equation}\label{eq:meanphase}
 \mu_\zeta
 =
 \frac{\varepsilon_h}{2\sqrt h}
 \left(\frac{u\overline2}{h}\right)
 e\left(-\frac{u\overline8}{h}\right),
 \qquad
 \varepsilon_h=
 \begin{cases}
 1,&h\equiv1\pmod4,\\
 i,&h\equiv3\pmod4.
 \end{cases}
\end{equation}
For $h=1$, one has $\mu_\zeta=1/2$.
In \eqref{eq:meanphase}, the parenthesized symbol is the Jacobi symbol.
\end{theorem}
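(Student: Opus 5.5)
Using \eqref{eq:b-lambda}, the mean over the period $4h$ is
\[
 \mu_\zeta=\frac1{4h}\sum_{r=1}^{4h}b_\zeta(r)=\frac1{4h}\sum_{k=0}^{2h-1}\lambda^{T_k},
\]
since the odd residues $r=2k+1$ modulo $4h$ correspond to $k$ modulo $2h$. The congruence $T_{k+2h}\equiv T_k\pmod h$ holds, so the sum is well defined. The plan is to evaluate $S:=\sum_{k\bmod 2h}\lambda^{T_k}$ by completing the square. We have $8T_k+1=(2k+1)^2$, so whenever $h$ is odd, $T_k\equiv \overline8\,((2k+1)^2-1)\pmod h$. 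Hence
\[
S=e\Bigl(-\tfrac{u\overline8}{h}\Bigr)\sum_{k\bmod 2h}e\Bigl(\tfrac{u\overline8(2k+1)^2}{h}\Bigr).
\]
As $k$ runs modulo $2h$, the values $2k+1$ reduced modulo $h$ cover each residue class modulo $h$ exactly twice, because $2$ is invertible modulo $h$. The sum therefore equals $2G(u\overline8;h)$, where $G(a;h)=\sum_{x\bmod h}e(ax^2/h)$.

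For odd $h$ and $(a,h)=1$, the classical evaluation is $G(a;h)=\left(\frac ah\right)\varepsilon_h\sqrt h$. This is the Gauss result for general odd $h$, which may be cited or obtained from the prime case via multiplicativity. Next we use $\left(\frac{\overline8}{h}\right)=\left(\frac{\overline2}{h}\right)^3=\left(\frac{\overline2}{h}\right)$. Combining, $\mu_\zeta=\frac{2\varepsilon_h\sqrt h}{4h}\left(\frac{u\overline2}{h}\right)e(-u\overline8/h)$, which is exactly \eqref{eq:meanphase}. The formula shows $|\mu_\zeta|=1/(2\sqrt h)\neq0$. For $h=1$ we have $\lambda=1$, and the mean of $b_\zeta$ is $\frac1{4}(1+1+1+1)\cdot\frac12$. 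More directly, over the period $4$ the values are $b(1)=1$, $b(3)=1$ and $b(2)=b(4)=0$, so $\mu=2/4=1/2$.

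For the even case $h$, I would show that $S=0$ using a shift. Since $T_{k+h}=T_k+hk+T_h$, we get $\lambda^{T_{k+h}}=\lambda^{T_k}\lambda^{hk}\lambda^{T_h}=\lambda^{T_k}\lambda^{T_h}$. Here $T_h=h(h+1)/2$, and since $h$ is even, $h+1$ is odd, so $T_h\equiv h/2\cdot(h+1)\equiv h/2\pmod h$. Thus $\lambda^{T_h}=\lambda^{h/2}=-1$ because $\lambda$ has exact order $h$. Pairing $k$ with $k+h$ in the sum over $k\bmod 2h$ then gives $S=-S$, so $S=0$. This proves that $\mu_\zeta\ne0$ if and only if $h$ is odd.

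The second equivalence is elementary. If $h=\ord(-\zeta)$ is odd, then $\zeta=-\lambda$ has order $2h\equiv2\pmod 4$. Conversely, suppose $\ord(\zeta)=2h'$ with $h'$ odd. Then $\zeta^{h'}=-1$, so $(-\zeta)^{h'}=(-1)^{h'}\zeta^{h'}=1$, and therefore $\ord(-\zeta)$ divides $h'$ and is odd. To finish, one checks that if $\ord(\zeta)\not\equiv2\pmod4$ then $\ord(-\zeta)$ is even. If $\ord(\zeta)$ is odd, then $-\zeta$ has order $2\ord(\zeta)$. If $4\mid\ord(\zeta)$, then $-\zeta$ has the same order, since $-1$ already lies in the $2$-part.

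The only real obstacle is bookkeeping in the Gauss-sum step. Three points need care: the residues $2k+1$ must be counted correctly modulo $h$ (each class exactly twice), the Jacobi symbol must be reduced via $\overline8=\overline2^{\,3}$, and the general odd-modulus Gauss sum sign $\varepsilon_h$ must be quoted correctly. None of these requires new ideas.
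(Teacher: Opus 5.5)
Your proof is correct and follows the paper's argument. Both reduce $\mu_\zeta$ to $\frac1{4h}\sum_{k\bmod 2h}\lambda^{T_k}$, use the shift $k\mapsto k+h$ with $\lambda^{T_h}=\lambda^{h/2}=-1$ to kill the even case, and in the odd case complete the square and apply the classical odd-modulus Gauss sum. The one cosmetic difference is that you complete the square via $8T_k+1=(2k+1)^2$, landing on $2G(u\overline8;h)$ and then using $\bigl(\frac{\overline8}{h}\bigr)=\bigl(\frac{\overline2}{h}\bigr)$, whereas the paper first folds the sum to $k\bmod h$ and obtains $G(u\overline2;h)$ directly.
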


\begin{proof}
The odd residue classes modulo $4h$ are $2k+1$ with $k$ modulo $2h$, so
\[
 \mu_\zeta=\frac1{4h}\sum_{k=0}^{2h-1}\lambda^{T_k}.
\]
For every $k$,
\[
 T_{k+h}-T_k=hk+\frac{h(h+1)}2.
\]
If $h$ is odd, the right-hand side is divisible by $h$, and the sum is twice the sum over $k$ modulo $h$. If $h$ is even, then $\lambda^{h/2}=-1$ and $h+1$ is odd, so
\[
 \lambda^{T_{k+h}}=-\lambda^{T_k}.
\]
The two halves cancel. The order equivalence follows because $-\zeta$ has odd order $h$ precisely when $\zeta$ has order $2h$ with $h$ odd.

Suppose that $h$ is odd. Completion of the square gives
\[
 \sum_{k\bmod h}e\left(\frac{uT_k}{h}\right)
 =
 e\left(-\frac{u\overline8}{h}\right)
 \sum_{k\bmod h}e\left(\frac{u\overline2\,k^2}{h}\right).
\]
The standard quadratic Gauss sum evaluation, in the notation of \cite[Chapter~1]{BerndtEvansWilliams}, is
\[
 \sum_{k\bmod h}e\left(\frac{ck^2}{h}\right)
 =\varepsilon_h\left(\frac ch\right)\sqrt h
 \qquad((c,h)=1).
\]
Division by $2h$ proves \eqref{eq:meanphase}. The case $h=1$ is immediate.
\end{proof}

\section{Finite Fourier duality}\label{sec:fourier}

We now assume that $h\geq2$ is even and put $M:=4h$.  The parameter
used in the Fourier calculation first lives modulo $2h$.

\begin{lemma}\label{lem:cusp-parameters}
Let $\alpha\in(\Z/2h\Z)^\times$.  For an odd representative $u$ of
$\alpha$, define an $M$-periodic function by
\begin{equation}\label{eq:defbalpha}
 b_\alpha(2k+1):=e\left(\frac{uT_k}{h}\right),
 \qquad b_\alpha(2k):=0.
\end{equation}
This definition is independent of the representative $u$.  The function
$b_\alpha$ is even, is supported on the odd residue classes, has mean zero,
and has minimal period $M$.  Moreover,
\begin{equation}\label{eq:lift-redundancy}
 b_{\alpha+h}=b_\alpha.
\end{equation}
Two members of the family are equal if and only if their parameters have
the same image in $(\Z/h\Z)^\times$.
\end{lemma}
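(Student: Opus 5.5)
The whole lemma will follow from three elementary congruences for $T_k$ modulo $h$, in the spirit of the proof of Theorem~\ref{thm:meanphase}. First note that since $h$ is even, the units of $\Z/2h\Z$ are exactly the odd residues prime to $h$, so every representative $u$ is odd and $(u,h)=1$. For independence of the representative, I will replace $u$ by $u+2hj$; this multiplies $b_\alpha(2k+1)$ by $e(2jT_k)=1$. The same computation with $u+h$ in place of $u$ gives a factor $e(T_k)=1$, which is \eqref{eq:lift-redundancy}. Here $u+h$ is still odd, so it is a legitimate representative of $\alpha+h$.

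Next I will check that \eqref{eq:defbalpha} really defines an $M$-periodic function. I will use
\[
 T_{k+h}-T_k=hk+\tfrac{h(h+1)}2 .
\]
It gives $e\bigl(u(T_{k+h}-T_k)/h\bigr)=e(uk)\,e\bigl(u(h+1)/2\bigr)=(-1)^{u(h+1)}=-1$, because $u$ and $h+1$ are odd. Hence
\[
 b_\alpha(m+2h)=-b_\alpha(m)
\]
for odd $m$, and trivially for even $m$. Applying this twice gives period $M=4h$. Evenness follows from $T_{-1-k}=T_k$, since $-(2k+1)=2(-1-k)+1$. Vanishing of the mean follows from the antiperiod $2h$: the two halves of a full period cancel, exactly as in the proof of Theorem~\ref{thm:meanphase}.

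For the final assertion, one direction is \eqref{eq:lift-redundancy} together with independence of the lift. Conversely, I will evaluate at $m=3$, i.e.\ $k=1$ and $T_1=1$. If $b_\alpha=b_\beta$ with representatives $u,u'$, this gives $e(u/h)=e(u'/h)$, so $u\equiv u'\pmod h$.

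The only step needing an argument beyond bookkeeping is minimality of the period, and I will handle it by testing the maximal proper divisors of $M$. The minimal period $d$ divides $M$. If $d<M$, then $d$ divides $M/p$ for some prime $p\mid M$, so it suffices to show that no $M/p$ is a period. For $p=2$, the antiperiod relation $b_\alpha(m+2h)=-b_\alpha(m)$ contradicts $2h$ being a period, because $|b_\alpha(m)|=1$ on odd $m$. For an odd prime $p\mid h$, put $s=2h/p$, so that shifting $m$ by $M/p=2s$ shifts $k$ by $s$. Periodicity would force
\[
 e\bigl(u(sk+s(s+1)/2)/h\bigr)=1\quad\text{for all }k .
\]
Taking the ratio of consecutive values of $k$ gives $e(us/h)=1$, so $h\mid us$, hence $h\mid s$ since $(u,h)=1$. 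This is impossible because $0<s=2h/p<h$. Thus the minimal period is $M$.
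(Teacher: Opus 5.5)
Your proof is correct and follows the paper's argument essentially step for step. You use the same ingredients: changing the representative by $2h$ or $h$, the antiperiod $2h$ from $T_{k+h}-T_k=hk+h(h+1)/2$, evenness from $T_{-k-1}=T_k$, and evaluation at $k=1$ for the final equivalence. The only variation is in minimality. There you rule out the maximal proper divisors $M/p$, using the antiperiod for $p=2$ and the consecutive-$k$ ratio for odd $p\mid h$. The paper instead analyzes an arbitrary period $2d$: $h\mid d$ from consecutive $k$, then $2h\mid d$ from the constant term. In both versions the linear term forces divisibility by $h$ and the half-integral constant term excludes the shift $2h$.
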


\begin{proof}
Changing $u$ by $2h$ does not change \eqref{eq:defbalpha}, while changing
it by $h$ multiplies the nonzero value by $e(T_k)=1$.  This proves
\eqref{eq:lift-redundancy}.  Since $T_{-k-1}=T_k$, the function is even.
Also
\[
 T_{k+h}-T_k=hk+\frac{h(h+1)}2.
\]
Because $u$ and $h+1$ are odd,
$b_\alpha(2(k+h)+1)=-b_\alpha(2k+1)$.  Thus the mean is zero and $4h$ is
a period.

Every period is even because translation by an odd integer interchanges
the zero and nonzero supports.  If $2d$ is a period, comparison on odd
indices gives
\[
 h\mid dk+\frac{d(d+1)}2\qquad(k\in\Z).
\]
Taking consecutive values of $k$ gives $h\mid d$.  Write $d=hj$.
Since $h$ is even, the remaining constant is divisible by $h$ only when
$j$ is even.  Hence $2h\mid d$, proving minimality.  Finally, evaluation
at $k=1$ shows that equality of two sequences implies equality of their
parameters modulo $h$; the converse is \eqref{eq:lift-redundancy}.
\end{proof}

Use the unnormalized Fourier transform
\begin{equation}\label{eq:DFTconvention}
 (\mathcal Ff)(r):=\sum_{m\bmod M}f(m)e\left(\frac{mr}{M}\right).
\end{equation}
For $\alpha\in(\Z/2h\Z)^\times$, set
\begin{equation}\label{eq:dual-parameter}
 \alpha^\vee:=-\alpha^{-1}\quad\text{in }(\Z/2h\Z)^\times.
\end{equation}
If $u$ is an odd representative, $Uu\equiv1\pmod{2h}$, and
$d=(u+1)/2$, put
\begin{equation}\label{eq:gauss-constant}
 G(u;2h):=\sum_{k\bmod2h}e\left(\frac{uk^2}{2h}\right),
 \qquad
 C_\alpha:=e\left(\frac1{4h}-\frac{Ud^2}{2h}\right)G(u;2h).
\end{equation}

\begin{theorem}\label{thm:DFT}
The constant in \eqref{eq:gauss-constant} is independent of all chosen
representatives and is intrinsically given by
\begin{equation}\label{eq:intrinsic-C}
 C_\alpha=(\mathcal F b_\alpha)(1).
\end{equation}
For every $\alpha\in(\Z/2h\Z)^\times$,
\begin{equation}\label{eq:DFTclosure}
 \mathcal F b_\alpha=C_\alpha b_{\alpha^\vee}.
\end{equation}
The duality is an exact involution modulo $2h$ and is compatible with
descent:
\begin{equation}\label{eq:duality-lift-compatibility}
 (\alpha^\vee)^\vee=\alpha,
 \qquad
 (\alpha+h)^\vee=\alpha^\vee+h,
 \qquad
 C_{\alpha+h}=C_\alpha.
\end{equation}
Furthermore,
\begin{equation}\label{eq:Cproperties}
 |C_\alpha|=\sqrt M=2\sqrt h,
 \qquad
 C_\alpha C_{\alpha^\vee}=M.
\end{equation}
Consequently, $u\mapsto-u^{-1}$ is a well-defined involution on
$(\Z/h\Z)^\times$, and \eqref{eq:DFTclosure} descends to that parameter
space.
\end{theorem}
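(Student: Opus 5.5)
The plan is to compute $\mathcal F b_\alpha$ directly, splitting the index $r$ by parity. Only the odd classes $m=2k+1$ with $k\bmod 2h$ contribute, so
\[
 (\mathcal F b_\alpha)(r)=\sum_{k\bmod 2h}e\left(\frac{uT_k}{h}+\frac{(2k+1)r}{4h}\right).
\]

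\emph{Even $r$.} Replace $k$ by $k+h$. By the computation in Lemma~\ref{lem:cusp-parameters}, the first phase changes sign, and the second is multiplied by $e(r/2)=1$. Hence the sum equals its own negative and vanishes. This matches $b_{\alpha^\vee}(r)=0$.

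\emph{Odd $r=2j+1$.} The exponent is
\[
 \frac{uk^2+(u+r)k}{2h}+\frac{r}{4h}.
\]
Since $u$ is odd, $u+r=2(d+j)$ is even, where $d=(u+1)/2$. Complete the square modulo $2h$ using $uU\equiv1\pmod{2h}$:
\[
 uk^2+2(d+j)k\equiv u\bigl(k+U(d+j)\bigr)^2-U(d+j)^2\pmod{2h}.
\]
The shifted square runs over a full residue system, so the sum becomes $e(r/4h)\,e(-U(d+j)^2/2h)\,G(u;2h)$. It remains to show this equals $C_\alpha\,e(-UT_j/h)=C_\alpha b_{\alpha^\vee}(2j+1)$, with $-U$ as an odd representative of $\alpha^\vee$. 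Subtracting the phases leaves
\[
 \frac{j}{2h}-\frac{U(2dj-j)}{2h}=\frac{j(1-Uu)}{2h}\equiv0\pmod 1,
\]
which proves \eqref{eq:DFTclosure}.

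\emph{Intrinsic formula and independence.} Setting $j=0$ (that is, $r=1$) and using $b_{\alpha^\vee}(1)=1$ gives \eqref{eq:intrinsic-C}. The quantity $(\mathcal F b_\alpha)(1)$ depends only on $b_\alpha$, and by Lemma~\ref{lem:cusp-parameters} $b_\alpha$ depends only on $\alpha$, in fact only on $\alpha\bmod h$. So $C_\alpha$ is independent of all choices, and $C_{\alpha+h}=C_\alpha$. The only delicate point in the whole argument is the bookkeeping above: the square is completed modulo $2h$, while the linear term and the constant $r/4h$ live modulo $4h$. I expect this to be the main place where errors could arise, and I would check it against the intrinsic value at $r=1$.

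\emph{Modulus and product.} Parseval gives $\sum_r|\mathcal Fb_\alpha(r)|^2=M\sum_m|b_\alpha(m)|^2=M\cdot2h$. By closure the left side is $|C_\alpha|^2\cdot 2h$, so $|C_\alpha|^2=M$. For the product, use $\mathcal F^2f(m)=Mf(-m)$ together with evenness of $b_\alpha$. Applying closure twice, and using that $(\alpha^\vee)^\vee=-(-\alpha^{-1})^{-1}=\alpha$ holds trivially in the group, gives $Mb_\alpha=C_\alpha C_{\alpha^\vee}b_\alpha$, hence $C_\alpha C_{\alpha^\vee}=M$.

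\emph{Lift compatibility and descent.} Since $\alpha$ and $\alpha^{-1}$ are odd and $h$ is even,
\[
 (\alpha+h)(\alpha^{-1}+h)=1+h(\alpha+\alpha^{-1})+h^2\equiv1\pmod{2h}.
\]
Thus $(\alpha+h)^{-1}=\alpha^{-1}+h$, and negating gives $(\alpha+h)^\vee=\alpha^\vee+h$. Consequently $u\mapsto-u^{-1}$ is well defined on $(\Z/h\Z)^\times$. Because $b$ and $C$ are both $h$-invariant by Lemma~\ref{lem:cusp-parameters} and the result above, \eqref{eq:DFTclosure} descends to that parameter space.
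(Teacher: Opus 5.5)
Your proof is correct, and for the closure itself it follows the paper's argument. The shared steps are:
\begin{itemize}
\item vanishing at even $r$ via the shift $k\mapsto k+h$;
\item completion of the square modulo $2h$ with $d=(u+1)/2$, leaving the residual phase $j(1-Uu)/(2h)\in\Z$;
\item the value at $r=1$, which gives lift-independence and $C_{\alpha+h}=C_\alpha$;
\item the identity $\mathcal F^2=MR$, which gives $C_\alpha C_{\alpha^\vee}=M$;
\item the check that $(\alpha+h)^{-1}=\alpha^{-1}+h$ in $(\Z/2h\Z)^\times$.
\end{itemize}

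The one genuine difference is how you obtain $|C_\alpha|=\sqrt M$. The paper reads it off the cited explicit even-modulus Gauss sum $G(u;2h)=(1+i)\varepsilon_u^{-1}\bigl(\frac{2h}{u}\bigr)\sqrt{2h}$. You instead combine Parseval for the unnormalized transform with the closure you have already proved, giving $|C_\alpha|^2\cdot 2h=M\cdot 2h$.

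Your route is self-contained and avoids the conventions for $\varepsilon_u$, the Kronecker symbol and the range of $u$. As a by-product it reproves $|G(u;2h)|=2\sqrt h$. The cost is that it needs closure first and yields only the modulus. The paper's citation gives the full phase of $C_\alpha$ independently of closure. That is more than the statement requires, but it is what lets the product identity $C_\alpha C_{\alpha^\vee}=M$ serve there as an independent check on the Gauss phase.
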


\begin{proof}
Only odd indices contribute, and hence
\begin{equation}\label{eq:DFTcalc1}
 (\mathcal F b_\alpha)(r)
 =e\left(\frac r{4h}\right)
 \sum_{k\bmod2h}
 e\left(\frac{uk^2+(u+r)k}{2h}\right).
\end{equation}
Pairing $k$ with $k+h$ shows that the sum vanishes when $r$ is even;
here $4\mid2h$ and $u$ is odd.  If $r$ is odd, put
$d_r=(u+r)/2$.  Completion of the square modulo $2h$ gives
\begin{equation}\label{eq:DFTcalc2}
 (\mathcal F b_\alpha)(r)
 =e\left(\frac r{4h}-\frac{Ud_r^2}{2h}\right)G(u;2h).
\end{equation}
At $r=1$ this is \eqref{eq:intrinsic-C}.  Writing $r=2j+1$ and dividing
by that value gives
\[
 \frac{(\mathcal F b_\alpha)(2j+1)}{C_\alpha}
 =e\left(\frac{j-U((d+j)^2-d^2)}{2h}\right)
 =e\left(-\frac{Uj(j+1)}{2h}\right).
\]
The omitted term is $j(1-Uu)/(2h)\in\Z$.  The last expression is
$b_{\alpha^\vee}(2j+1)$, proving \eqref{eq:DFTclosure}.

For $1\leq u<2h$, the even-modulus Gauss evaluation in our convention is
\begin{equation}\label{eq:even-gauss-phase}
 G(u;2h)=(1+i)\varepsilon_u^{-1}
 \left(\frac{2h}{u}\right)\sqrt{2h},
 \qquad
 \varepsilon_u=
 \begin{cases}
 1,&u\equiv1\pmod4,\\
 i,&u\equiv3\pmod4,
 \end{cases}
\end{equation}
where the parenthesized symbol is the Kronecker symbol
\cite[Theorem~1.5.2]{BerndtEvansWilliams}.  Thus
$|G(u;2h)|=2\sqrt h$, which proves the first identity in
\eqref{eq:Cproperties}.

The first identity in \eqref{eq:duality-lift-compatibility} follows from
\eqref{eq:dual-parameter}.  If $U=\alpha^{-1}$, then
$(\alpha+h)^{-1}=U+h\pmod{2h}$ because $h$ is even and $\alpha,U$ are
odd.  This proves the second identity.  The last follows from
\eqref{eq:intrinsic-C} and \eqref{eq:lift-redundancy}; it also proves
that the explicit phase in \eqref{eq:gauss-constant} is lift-independent.

Finally, the convention \eqref{eq:DFTconvention} gives
\begin{equation}\label{eq:fourier-square}
 \mathcal F^2f=MRf,
 \qquad (Rf)(x):=f(-x).
\end{equation}
Since $b_\alpha$ is even, applying \eqref{eq:DFTclosure} twice proves
$C_\alpha C_{\alpha^\vee}=M$.  This also checks the Fourier sign and the
Gauss phase independently.
\end{proof}

For later use, if $u\in(\Z/h\Z)^\times$, we write
\begin{equation}\label{eq:descended-notation}
 b_{h,u}:=b_\alpha,
 \qquad
 \gamma_h(u):=C_\alpha,
 \qquad
 u^\vee\equiv-u^{-1}\pmod h,
\end{equation}
where $\alpha$ is either lift of $u$ to $(\Z/2h\Z)^\times$.
Theorem~\ref{thm:DFT} proves that all three objects are well-defined.

For interpretation, let
$V_h:=\operatorname{span}_{\C}
\{b_\alpha:\alpha\in(\Z/2h\Z)^\times\}$ and
$S:=M^{-1/2}\mathcal F|_{V_h}$.  Since the orbit vectors are even,
\eqref{eq:fourier-square} gives $S^2=I$; hence the normalized Fourier
involution has eigenvalues $\pm1$.

\begin{remark}
The preceding observation records the finite-Weil content
needed here: only the normalized Fourier operator corresponding to the
$S$-operator is used.  No compatible $T$-operator or full metaplectic
representation on $V_h$ is constructed, and this observation does not imply
a scalar modular transformation law for $W_n$.
\end{remark}

The small moduli fix the normalization.  For $h=2$, the odd-entry vector
$b_{2,1}=(1,-1,-1,1)$ satisfies
$\mathcal F b_{2,1}=\sqrt8\,b_{2,1}$, so $\gamma_2(1)=\sqrt8$.
For $h=4$, the classes $1$ and $3$ are exchanged, with
\[
 b_{4,3}=\overline{b_{4,1}},
 \qquad
 \gamma_4(1)=4e(1/16),
 \qquad \gamma_4(3)=4e(-1/16).
\]
\[
 \mathcal F b_{4,1}=4e(1/16)b_{4,3},
 \qquad
 \mathcal F b_{4,3}=4e(-1/16)b_{4,1}.
\]
The same orbit reappears in the nonrational radial example
\eqref{eq:hfour-egf}.

\section{Functional equation and nonvanishing at negative odd integers}\label{sec:nonvanishing}

For $\Re(s)>1$, set
$L(s,b_\alpha)=\sum_{m\geq1}b_\alpha(m)m^{-s}$ and define
\begin{equation}\label{eq:completed-L}
 \Lambda_\alpha(s):=
 \left(\frac{M}{\pi}\right)^{s/2}
 \Gamma\left(\frac s2\right)L(s,b_\alpha),
 \qquad
 \omega_\alpha:=\frac{C_\alpha}{\sqrt M}.
\end{equation}

\begin{theorem}\label{thm:functional}
The function $\Lambda_\alpha$ is entire and
\begin{equation}\label{eq:functional}
 \Lambda_\alpha(s)=\omega_\alpha\Lambda_{\alpha^\vee}(1-s),
 \qquad
 |\omega_\alpha|=1,
 \qquad
 \omega_\alpha\omega_{\alpha^\vee}=1.
\end{equation}
Here $M=4h$ is both the Fourier modulus and the minimal period.
For every $n\geq0$,
\begin{equation}\label{eq:negativeodd}
 L(-2n-1,b_\alpha)
 =(-1)^{n+1}C_\alpha
 \frac{(2n+1)!(2h)^{2n+1}}{\pi^{2n+2}}
 L(2n+2,b_{\alpha^\vee}).
\end{equation}
\end{theorem}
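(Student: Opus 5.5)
The plan is the classical Hecke–Riemann argument for even periodic sequences, using Theorem~\ref{thm:DFT} in place of the Gauss-sum identity for primitive characters. Set
\[
 \theta_\alpha(y):=\sum_{m\in\Z}b_\alpha(m)e^{-\pi m^2y/M}.
\]
Since $b_\alpha$ is even with $b_\alpha(0)=0$, the Mellin transform gives, for $\Re(s)>1$,
\[
 \int_0^\infty \theta_\alpha(y)y^{s/2}\frac{dy}{y}=2\Lambda_\alpha(s).
\]
This follows from the substitution $y\mapsto yM/(\pi m^2)$, which is what produces the factor $(M/\pi)^{s/2}\Gamma(s/2)$.

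Next I would derive the theta inversion. Write $m=r+Mj$ and apply Poisson summation in $j$ to each residue class. Combining the classes yields
\[
 \theta_\alpha(y)=\frac{1}{\sqrt{My}}\sum_{\ell\in\Z}(\mathcal F b_\alpha)(\ell)e^{-\pi\ell^2/(My)}.
\]
The sign convention in $\mathcal F$ does not matter, because $b_\alpha$ is even. Applying \eqref{eq:DFTclosure} then gives
\[
 \theta_\alpha(y)=\frac{C_\alpha}{\sqrt M}\,y^{-1/2}\theta_{\alpha^\vee}(1/y)=\omega_\alpha y^{-1/2}\theta_{\alpha^\vee}(1/y).
\]
The mean of $b_{\alpha^\vee}$ is zero by Lemma~\ref{lem:cusp-parameters} and $b_{\alpha^\vee}(0)=0$. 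Hence $\theta_{\alpha^\vee}(1/y)$ decays like $e^{-\pi/(My)}$ as $y\to0^+$, and the same holds for $\theta_\alpha(y)$. There are no constant terms to subtract.

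Split the Mellin integral at $y=1$ and transform the piece on $(0,1)$ by $y\mapsto1/y$. This gives
\[
 2\Lambda_\alpha(s)=\int_1^\infty\Bigl(\theta_\alpha(y)y^{s/2}+\omega_\alpha\theta_{\alpha^\vee}(y)y^{(1-s)/2}\Bigr)\frac{dy}{y}.
\]
Both integrals converge for all $s$ because of the exponential decay, so $\Lambda_\alpha$ is entire. The right side is visibly carried to $\omega_\alpha\cdot 2\Lambda_{\alpha^\vee}(1-s)$ once we check the identity
\[
 \omega_\alpha\omega_{\alpha^\vee}=1,
\]
which is exactly $C_\alpha C_{\alpha^\vee}=M$ from \eqref{eq:Cproperties}. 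The same equation together with $|C_\alpha|=\sqrt M$ gives the statements about $\omega_\alpha$.

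The step needing the most care is the bookkeeping of the factor $1/\sqrt M$ and the Fourier sign in the Poisson step, so that the constant comes out exactly as $\omega_\alpha=C_\alpha/\sqrt M$ and not its conjugate. I would check this against the $h=2$ example, where $\mathcal F b_{2,1}=\sqrt8\,b_{2,1}$.

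For \eqref{eq:negativeodd}, set $s=-2n-1$ in \eqref{eq:functional}, so that $1-s=2n+2$. Using $\Gamma(-n-\tfrac12)=(-1)^{n+1}\pi^{1/2}2^{2n+2}(n+1)!/(2n+2)!$ we get
\[
 L(-2n-1,b_\alpha)=\omega_\alpha\Bigl(\frac M\pi\Bigr)^{2n+3/2}\frac{\Gamma(n+1)}{\Gamma(-n-\frac12)}L(2n+2,b_{\alpha^\vee}).
\]
Substituting $\Gamma(n+1)=n!$ and $(2n+2)!/(n+1)=2(2n+1)!$ simplifies this to
\[
 L(-2n-1,b_\alpha)=(-1)^{n+1}\frac{C_\alpha}{\sqrt M}\,\frac{M^{2n+2}\sqrt M\,2(2n+1)!}{2^{2n+2}\pi^{2n+2}}\,L(2n+2,b_{\alpha^\vee}).
\]
Since $M^{2n+2}/2^{2n+2}=(2h)^{2n+2}$, the constant is $(-1)^{n+1}C_\alpha(2n+1)!(2h)^{2n+1}\cdot 2(2h)/(\ldots)$, and I would finish by checking the power of $2h$ against the claimed $(2h)^{2n+1}$. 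This exponent check is routine but error-prone, and it is where I would recheck the normalization (for example, whether the $\Gamma$ factor should be $\Gamma(s/2)$ with $(M/\pi)^{s/2}$ or $(M/(2\pi))$-type scalings). I would reconcile it by testing $n=0$ numerically at $h=2$.
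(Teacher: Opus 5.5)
Your route is the paper's, and everything through the functional equation is correct: Poisson summation plus \eqref{eq:DFTclosure} gives $\theta_\alpha(y)=\omega_\alpha y^{-1/2}\theta_{\alpha^\vee}(1/y)$, the absence of $m=0$ terms gives exponential decay at both ends, the Mellin transform gives entirety, and you then specialize at $s=-2n-1$. One small difference: the paper substitutes $t\mapsto1/t$ in the whole integral, which gives $\Lambda_\alpha(s)=\omega_\alpha\Lambda_{\alpha^\vee}(1-s)$ directly. Your split at $y=1$ additionally needs $(\alpha^\vee)^\vee=\alpha$ and $\omega_\alpha\omega_{\alpha^\vee}=1$. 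Both are supplied by Theorem~\ref{thm:DFT}, so this is harmless.

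The gap is in the last step: as written, your computation does not reach \eqref{eq:negativeodd}. Your first display after setting $s=-2n-1$ is right, with the factor $(M/\pi)^{2n+3/2}$. In the next display, however, you replace $M^{2n+3/2}$ by $M^{2n+2}\sqrt M$. That slip, a factor of $M$, is the only source of the extra $2(2h)$ you flag. The normalization $(M/\pi)^{s/2}\Gamma(s/2)$ is correct, since you derived it yourself from the Mellin transform, so nothing needs rescaling. Carried out correctly,
\[
 \frac{C_\alpha}{\sqrt M}\Bigl(\frac M\pi\Bigr)^{2n+3/2}\frac{n!}{\Gamma(-n-\frac12)}
 =(-1)^{n+1}C_\alpha\,\frac{2(2n+1)!\,M^{2n+1}}{4^{n+1}\pi^{2n+2}},
\]
and $2M^{2n+1}/4^{n+1}=(M/2)^{2n+1}=(2h)^{2n+1}$, which is exactly the stated constant.

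The numerical test you propose confirms this. For $h=2$ one has $b_{2,1}=\chi_8$ (self-dual), $C_\alpha=\sqrt8$ and $L(2,\chi_8)=\pi^2/(8\sqrt2)$. Then \eqref{eq:negativeodd} gives $L(-1,\chi_8)=-1$, which agrees with $B_{2,\chi_8}=2$ and with $\mathcal P_{2,1}'(1)=-1$. Your uncorrected expression would give $-8$.
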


\begin{proof}
Put
\[
 \theta_\alpha(t):=\sum_{m\in\Z}b_\alpha(m)e^{-\pi m^2t/M}.
\]
Poisson summation with \eqref{eq:DFTconvention} and
\eqref{eq:DFTclosure} gives
\[
 \theta_\alpha(t)=\omega_\alpha t^{-1/2}
 \theta_{\alpha^\vee}(1/t).
\]
Because $b_\alpha(0)=0$, the function $\theta_\alpha(t)$ decays
exponentially as $t\to\infty$.  By the transformation formula and
$b_{\alpha^\vee}(0)=0$,
\[
 \theta_\alpha(t)
 =O\left(t^{-1/2}e^{-\pi/(Mt)}\right)
 \qquad(t\to0^+).
\]
Thus the Mellin transform is entire, and initially for
$\Re(s)>1$,
\[
 \int_0^\infty\theta_\alpha(t)t^{s/2-1}\,dt
 =2\left(\frac M\pi\right)^{s/2}
 \Gamma\left(\frac s2\right)L(s,b_\alpha)
 =2\Lambda_\alpha(s).
\]
The theta transformation followed by $t\mapsto1/t$ proves
\eqref{eq:functional}; the root-number assertions follow from
\eqref{eq:Cproperties}.  Inserting $s=-2n-1$ into
\eqref{eq:functional} and using
\[
 \Gamma\left(-n-\frac12\right)
 =\frac{(-4)^{n+1}(n+1)!\sqrt\pi}{(2n+2)!},
\]
together with $\Gamma(n+1)=n!$ and $M=4h$, gives
\eqref{eq:negativeodd}.
\end{proof}

\begin{theorem}\label{thm:nonvanishing}
For every even $h\geq2$, every
$\alpha\in(\Z/2h\Z)^\times$, and every $n\geq0$,
\begin{equation}\label{eq:nonvanishing}
 L(-2n-1,b_\alpha)\ne0.
\end{equation}
\end{theorem}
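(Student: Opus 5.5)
By \eqref{eq:negativeodd} of Theorem~\ref{thm:functional}, $L(-2n-1,b_\alpha)$ equals a nonzero constant times $L(2n+2,b_{\alpha^\vee})$. The constant is nonzero because $|C_\alpha|=2\sqrt h\neq0$ by \eqref{eq:Cproperties}. So it suffices to prove that $L(s,b_\beta)\neq0$ for every real $s\geq2$ and every $\beta\in(\Z/2h\Z)^\times$, and then apply this with $\beta=\alpha^\vee$ and $s=2n+2$. In this range the Dirichlet series converges absolutely, and we may use the series directly.

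The plan is a strict first-term estimate. Since $b_\beta$ is supported on odd $m$ and $|b_\beta(m)|=1$ there, with $b_\beta(1)=e(0)=1$ (because $T_0=0$), we have
\[
 |L(s,b_\beta)|\geq 1-\sum_{\substack{m\geq3\\ m\ \mathrm{odd}}}m^{-s}
 = 2-(1-2^{-s})\zeta(s).
\]
For $s=2$ the right-hand side is $2-\tfrac34\cdot\tfrac{\pi^2}{6}=2-\pi^2/8>0$, since $\pi^2<16$. The expression $(1-2^{-s})\zeta(s)=\sum_{m\ \mathrm{odd}}m^{-s}$ is decreasing in $s$, so the bound improves for larger $s$. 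Alternatively, one can bound the tail by $\sum_{m\geq3\ \mathrm{odd}}m^{-s}\leq 3^{-s}+\int_3^\infty x^{-s}\,dx/2$ and check it is $<1$ for $s\ge2$. Hence $L(2n+2,b_\beta)\neq0$, and therefore $L(-2n-1,b_\alpha)\neq0$.

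The main point to handle carefully is that the first-term estimate must be applied to the dual value $L(2n+2,b_{\alpha^\vee})$, where absolute convergence holds. Applying it directly at $s=-2n-1$ makes no sense. We also need $b_{\alpha^\vee}(1)=1$ exactly, and this holds for any parameter because $T_0=0$. Beyond this the argument is routine. The borderline case is $n=0$, i.e.\ $s=2$, where the margin $2-\pi^2/8\approx0.766$ is comfortable.
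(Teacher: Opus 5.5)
Your proposal is correct and follows the paper's proof exactly. You reduce via \eqref{eq:negativeodd} to the dual value $L(2n+2,b_{\alpha^\vee})$ (the prefactor is nonzero since $|C_\alpha|=2\sqrt h$), then use strict first-term dominance with $b_{\alpha^\vee}(1)=1$ and the tail bound $\sum_{m\geq3,\,m\ \mathrm{odd}}m^{-s}\leq\pi^2/8-1<1$ for real $s\geq2$.
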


\begin{proof}
For every real $s\geq2$, the first term is
$b_{\alpha^\vee}(1)=1$, and strict first-term dominance gives
\begin{align*}
 |L(s,b_{\alpha^\vee})-1|
 &\leq\sum_{\substack{m\geq3\\m\ \mathrm{odd}}}m^{-s}\\
 &\leq\sum_{\substack{m\geq3\\m\ \mathrm{odd}}}m^{-2}
 =\frac{\pi^2}{8}-1<1.
\end{align*}
Thus $L(s,b_{\alpha^\vee})\ne0$, and
\eqref{eq:negativeodd} proves \eqref{eq:nonvanishing}.
\end{proof}

The functional equation for a periodic Dirichlet series and the Fourier
transform of a quadratic phase are classical.  The new assertion is the
exact closure \eqref{eq:DFTclosure} of this family; its decisive consequence
is Theorem~\ref{thm:nonvanishing}.

\section{Radial asymptotics and exact radial values}\label{sec:radial}

The Mellin argument below is standard for partial theta functions with
periodic coefficients; compare
\cite{BerndtPoisson,BringmannFolsomMilas,FolsomPeriodic,GoswamiOsburn,HanLiSauzinSun,HuKim}.
We state the precise radial form needed for division of the moment series.

\begin{lemma}\label{lem:Mellin}
Let $b$ be periodic with mean $\mu_b$, and let $\ell$ be a nonnegative
integer.  As $x\to0^+$,
\begin{equation}\label{eq:Mellin}
 \sum_{m\geq1}b(m)m^\ell e^{-xm^2}
 \sim
 \frac{\mu_b}{2}\Gamma\left(\frac{\ell+1}{2}\right)x^{-(\ell+1)/2}
 +\sum_{r\geq0}\frac{(-1)^r}{r!}L(-\ell-2r,b)x^r.
\end{equation}
The first term is absent when $\mu_b=0$.
\end{lemma}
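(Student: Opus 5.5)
We take the Mellin transform in $x$ of $F_\ell(x):=\sum_{m\geq1}b(m)m^\ell e^{-xm^2}$ and read off the asymptotic expansion from the poles of the transform, in the usual way. Everything reduces to the meromorphic continuation \eqref{eq:Hurwitzdecomp}, which holds for any periodic $b$.

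\textbf{Mellin transform.} For $\Re(w)$ large, the identity $\int_0^\infty e^{-xm^2}x^{w-1}\,dx=\Gamma(w)m^{-2w}$ and absolute convergence give
\[
 \int_0^\infty F_\ell(x)x^{w-1}\,dx=\Gamma(w)L(2w-\ell,b),
 \qquad \Re(w)>\tfrac{\ell+1}{2}.
\]
Mellin inversion then writes $F_\ell(x)$ as $\frac1{2\pi i}\int_{(c)}\Gamma(w)L(2w-\ell,b)x^{-w}\,dw$ with $c>(\ell+1)/2$.

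\textbf{Shifting the contour.} We move the line of integration to $\Re(w)=-R-\tfrac12$ for an arbitrary integer $R\geq0$ and collect the residues we cross.
\begin{itemize}
\item By \eqref{eq:Hurwitzdecomp}, $L(s,b)$ has at most a simple pole at $s=1$, with residue $\mu_b$. So $L(2w-\ell,b)$ has a simple pole at $w=(\ell+1)/2$ with residue $\mu_b/2$. This contributes $\frac{\mu_b}{2}\Gamma\bigl(\frac{\ell+1}{2}\bigr)x^{-(\ell+1)/2}$, which is absent when $\mu_b=0$.
\item The poles of $\Gamma$ at $w=-r$ have residue $(-1)^r/r!$. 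They contribute $\frac{(-1)^r}{r!}L(-\ell-2r,b)x^r$ for $0\leq r\leq R$.
\end{itemize}
If $\ell$ is odd, the pole at $(\ell+1)/2$ is a positive integer, so it never coincides with a pole of $\Gamma$. If $\ell$ is even, it is a half-integer, again distinct from the poles of $\Gamma$. In both cases the poles are simple and no coincidence occurs. The residue sum is exactly the right-hand side of \eqref{eq:Mellin}, truncated at $r=R$.

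\textbf{Bounding the shifted integral (main obstacle).} The remaining integral on $\Re(w)=-R-\tfrac12$ is $O(x^{R+1/2})$, provided two things hold.
\begin{itemize}
\item The horizontal segments at height $\pm T$ vanish as $T\to\infty$.
\item The vertical integral converges absolutely.
\end{itemize}
Both follow from the same estimate. Hurwitz zeta grows at most polynomially in $|\Im s|$ on vertical strips, uniformly in the parameter $r/M\in(0,1]$. This holds for $\Re s\geq1/2$ by standard convexity, and for $\Re s<0$ by the Hurwitz functional equation. By \eqref{eq:Hurwitzdecomp}, $L(2w-\ell,b)$ is therefore polynomially bounded in vertical strips away from the pole. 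Stirling's formula gives exponential decay $|\Gamma(w)|\ll|\Im w|^{\Re w-1/2}e^{-\pi|\Im w|/2}$, which dominates any polynomial growth. Since $R$ is arbitrary, the asymptotic expansion \eqref{eq:Mellin} follows, understood in the Poincaré sense.

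An alternative route avoids the contour estimates. One splits $b$ into residue classes modulo $M$ and applies the Euler–Maclaurin asymptotic for $\sum_{j\geq0}(jM+r)^\ell e^{-x(jM+r)^2}$. This produces Hurwitz zeta values at negative integers and again gives the same expansion.
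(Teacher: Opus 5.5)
Your proposal is correct and follows essentially the same route as the paper: Mellin inversion of $\Gamma(w)L(2w-\ell,b)x^{-w}$, a shift to $\Re(w)=-R-\tfrac12$, and residues at $w=(\ell+1)/2$ (contributing $\mu_b/2$ times the Gamma factor) and at $w=-r$. The shifted integral is then controlled by Stirling's formula together with polynomial vertical-strip bounds for the finitely many Hurwitz zeta functions in \eqref{eq:Hurwitzdecomp}. Your Euler--Maclaurin alternative is an extra the paper does not use.
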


\begin{proof}
For $c>(\ell+1)/2$, Mellin inversion on the positive real ray gives
\[
 \sum_{m\geq1}b(m)m^\ell e^{-xm^2}
 =
 \frac1{2\pi i}\int_{(c)}
 \Gamma(s)L(2s-\ell,b)x^{-s}\,ds,
\]
where $x^{-s}=e^{-s\log x}$ and $\log x$ is real.
By \eqref{eq:Hurwitzdecomp}, the Dirichlet series has only the possible
pole at argument $1$, of residue $\mu_b$.  Thus the corresponding pole
in the $s$-plane is at $(\ell+1)/2$ and has residue equal to the first
term of \eqref{eq:Mellin}.  The poles of $\Gamma(s)$ at $s=-r$ contribute
the remaining terms.

Fix an integer $R\geq0$ and move the line of integration to
$\Re(s)=-R-1/2$.  The crossed poles are $s=(\ell+1)/2$, when
$\mu_b\ne0$, and $s=0,-1,\ldots,-R$.  Stirling's formula, together with
the standard polynomial bounds in vertical strips for the finitely many
Hurwitz zeta functions in \eqref{eq:Hurwitzdecomp}, makes the horizontal
integrals tend to zero and bounds the new vertical integral by
$O(x^{R+1/2})$.  Hence the finite truncation is
\begin{align*}
 \sum_{m\geq1}b(m)m^\ell e^{-xm^2}
 &=\frac{\mu_b}{2}
 \Gamma\left(\frac{\ell+1}{2}\right)x^{-(\ell+1)/2}\notag\\
 &\quad+\sum_{j=0}^{R}\frac{(-1)^j}{j!}
 L(-\ell-2j,b)x^j+O(x^{R+1/2}),
\end{align*}
where the first term is omitted when $\mu_b=0$.  Since $R$ is arbitrary,
this proves the asserted Poincar\'e expansion.
\end{proof}

We shall use the following elementary division fact.  Fix an integer
$J\geq1$, real numbers $a,b$, and complex numbers
$f_0,\ldots,f_{J-1},g_0,\ldots,g_{J-1}$ with $g_0\ne0$.  Suppose, as
$x\to0^+$, that
\[
 F(x)=x^a\left(\sum_{j=0}^{J-1}f_jx^j+O(x^J)\right),
 \qquad
 G(x)=x^b\left(\sum_{j=0}^{J-1}g_jx^j+O(x^J)\right),
\]
Since $G(x)/(g_0x^b)=1+O(x)$, for all sufficiently small $x>0$,
\[
 \left|\frac{G(x)}{g_0x^b}\right|\geq\frac12.
\]
Thus $G(x)\ne0$ there, and formal inversion of the truncated series gives
\[
 \frac{F(x)}{G(x)}
 =x^{a-b}\left(\sum_{j=0}^{J-1}c_jx^j+O(x^J)\right),
\]
where the coefficients $c_j$ are determined uniquely by multiplication of
the two truncated series and $c_0=f_0/g_0$.  This also proves the stated
remainder estimate.

\begin{theorem}\label{thm:radial}
Let $x=t/8$. For every root of unity $\zeta$ and every $n\geq0$,
\begin{equation}\label{eq:Nradial}
 N_n(\zeta e^{-t})
 \sim
 e^x\left\{
 \frac{\mu_\zeta}{2}n!x^{-n-1}
 +\sum_{r\geq0}\frac{(-1)^r}{r!}
 L(-2n-2r-1,b_\zeta)x^r
 \right\},
\end{equation}
where the mean term is omitted if $\mu_\zeta=0$.

If $\ord(\zeta)\equiv2\pmod4$, then
\begin{equation}\label{eq:divergentcusp}
 W_n(\zeta e^{-t})\sim n!\left(\frac8t\right)^n.
\end{equation}
At every other root of unity,
\begin{equation}\label{eq:finitecusp}
 \lim_{t\to0^+}W_n(\zeta e^{-t})
 =
 \frac{L(-2n-1,b_\zeta)}{L(-1,b_\zeta)},
\end{equation}
and the limit is finite and nonzero.  For every fixed $\zeta$, the
denominator $D(\zeta e^{-t})$ is nonzero on some interval $0<t<t_0(\zeta)$.
\end{theorem}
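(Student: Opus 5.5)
Rewrite $N_n$ as a periodic Gaussian moment and apply Lemma~\ref{lem:Mellin}. By \eqref{eq:oddform}, for $q=\zeta e^{-t}$ we have
\[
 N_n(\zeta e^{-t})=\sum_{m\ \mathrm{odd}}\chi_8(m)\zeta^{(m^2-1)/8}m^{2n+1}e^{-t(m^2-1)/8}
 =e^{x}\sum_{m\geq1}b_\zeta(m)m^{2n+1}e^{-xm^2},
\]
with $x=t/8$, using the definition \eqref{eq:defbzeta-intro} of $b_\zeta$. We apply Lemma~\ref{lem:Mellin} with $b=b_\zeta$ and $\ell=2n+1$. Then $\Gamma((\ell+1)/2)=n!$ and $x^{-(\ell+1)/2}=x^{-n-1}$, and the constant series is $L(-2n-1-2r,b_\zeta)$. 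This is exactly \eqref{eq:Nradial}. The factor $e^x$ is harmless: it is analytic with value $1$ at $0$, so multiplying a Poincar\'e expansion by it gives another one.

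\textbf{Nonzero mean.} If $\ord(\zeta)\equiv2\pmod4$, Theorem~\ref{thm:meanphase} gives $\mu_\zeta\ne0$. Then $N_n=e^x x^{-n-1}(\tfrac{\mu_\zeta}{2}n!+O(x^{n+1}))$. Only the leading term is needed, and the error is $O(x^{n+1})$, which is in particular $O(x)$. For $D=N_0$ this reads $e^x x^{-1}(\mu_\zeta/2+O(x))$. The division fact stated before the theorem, with $J=1$, $a=-n-1$ and $b=-1$, then shows that $D\ne0$ for small $x$ and that $W_n\sim n!\,x^{-n}=n!(8/t)^n$. This is \eqref{eq:divergentcusp}.

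\textbf{Zero mean.} At every other root $\mu_\zeta=0$ by Theorem~\ref{thm:meanphase}. Here $\lambda=-\zeta$ has even order $h$. If $h\geq2$, then $b_\zeta=b_\alpha$ for the parameter $\alpha$ with $\lambda=e(u/h)$ via \eqref{eq:b-lambda} and \eqref{eq:defbalpha}. The case $h=1$ is $\lambda=1$, i.e.\ $\zeta=-1$ of order $2\equiv2\pmod4$, which is already covered above. The expansions become $N_n=e^x(L(-2n-1,b_\zeta)+O(x))$ and $D=e^x(L(-1,b_\zeta)+O(x))$. Theorem~\ref{thm:nonvanishing} with $n=0$ gives $L(-1,b_\alpha)\ne0$, so the division fact applies with $a=b=0$. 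This gives the nonvanishing of $D$ on a terminal interval and the limit \eqref{eq:finitecusp}. The same theorem applied at $n$ gives $L(-2n-1,b_\alpha)\ne0$, so the limit is nonzero.

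\textbf{Expected obstacle.} The only delicate point is matching the data. The normalization $e^{-t(m^2-1)/8}=e^{x}e^{-xm^2}$ must be checked, and the mean of $b_\zeta$ computed over the period $4h$ must agree with the $\mu_b$ entering Lemma~\ref{lem:Mellin}, which holds for any period. One must also confirm that $b_\zeta$ coincides with some $b_\alpha$ with $\alpha$ a unit. This holds because $\lambda$ has exact order $h$, so $(u,h)=1$. Then $u$ is odd since $h$ is even, so $u$ is a unit mod $2h$. All other steps are direct citations of the earlier results.
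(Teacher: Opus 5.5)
Your proof is correct and follows the paper's argument essentially step for step. Both rewrite $N_n(\zeta e^{-t})=e^x\sum_{m\ge1}b_\zeta(m)m^{2n+1}e^{-xm^2}$, apply Lemma~\ref{lem:Mellin} with $\ell=2n+1$, and use the division fact in the two mean cases, with Theorem~\ref{thm:nonvanishing} supplying $L(-1,b_\zeta)\ne0$ after $b_\zeta$ is identified with $b_{h,u}$. You also make explicit the use of that theorem at index $n$ to show the limit is nonzero, which the paper leaves implicit; your aside about $h=1$ is vacuous, since $h$ is even in the zero-mean case.
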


\begin{proof}
For odd $m$,
\[
 e^{-t(m^2-1)/8}=e^xe^{-xm^2}.
\]
Thus \eqref{eq:oddform} becomes
\[
 N_n(\zeta e^{-t})
 =e^x\sum_{m\geq1}b_\zeta(m)m^{2n+1}e^{-xm^2}.
\]
Lemma~\ref{lem:Mellin} with $\ell=2n+1$ proves \eqref{eq:Nradial}.

When $\mu_\zeta\neq0$, the leading terms of numerator and denominator are, respectively,
\[
 \frac{\mu_\zeta}{2}n!x^{-n-1}
 \quad\text{and}\quad
 \frac{\mu_\zeta}{2}x^{-1}.
\]
Their quotient and the division fact above prove
\eqref{eq:divergentcusp}.  In this case
\[
 D(\zeta e^{-t})
 =e^x\frac{\mu_\zeta}{2}x^{-1}\bigl(1+O(x)\bigr).
\]
When $\mu_\zeta=0$, write $-\zeta=e(u/h)$ in lowest terms.  Then $h$
is even, $b_\zeta=b_{h,u}$, and Theorem~\ref{thm:nonvanishing} gives
$L(-1,b_\zeta)\ne0$.  Hence
\[
 D(\zeta e^{-t})
 =e^xL(-1,b_\zeta)\bigl(1+O(x)\bigr),
\]
and the same division fact proves \eqref{eq:finitecusp}.  In both cases
the relative error is smaller than $1/2$ on a terminal radial segment,
which proves the final assertion.
\end{proof}

We now make the zero-mean radial values explicit.  Let $h$ be even,
$u\in(\Z/h\Z)^\times$, and put $\lambda=e(u/h)$.  Define
\begin{equation}\label{eq:PRdef}
 \mathcal P_{h,u}(Y):=\sum_{k=0}^{h-1}\lambda^{T_k}Y^k,
 \qquad
 \mathcal R_{h,u}(x):=
 e^{(1-h)x}\mathcal P_{h,u}(e^{2x}).
\end{equation}

\begin{theorem}\label{thm:cuspegf}
The polynomial and exponential polynomial in \eqref{eq:PRdef} satisfy
\begin{equation}\label{eq:reciprocalP}
 Y^{h-1}\mathcal P_{h,u}(Y^{-1})=-\mathcal P_{h,u}(Y),
\end{equation}
and $\mathcal R_{h,u}$ is odd.  Moreover,
\begin{equation}\label{eq:LminusoneP}
 L(-1,b_{h,u})=\mathcal P'_{h,u}(1)
 =\frac12\mathcal R'_{h,u}(0),
\end{equation}
and
\begin{equation}\label{eq:Lcuspegf}
 \sum_{n\geq0}L(-2n-1,b_{h,u})
 \frac{x^{2n+1}}{(2n+1)!}
 =\frac{\mathcal R_{h,u}(x)}{2\cosh(hx)}.
\end{equation}
Consequently, for
\begin{equation}\label{eq:defAhu}
 A_n(h,u):=\frac{L(-2n-1,b_{h,u})}{L(-1,b_{h,u})},
\end{equation}
one has the finite exponential generating function
\begin{equation}\label{eq:Acuspegf}
 \sum_{n\geq0}A_n(h,u)\frac{x^{2n+1}}{(2n+1)!}
 =\frac{\mathcal R_{h,u}(x)}
 {\mathcal R'_{h,u}(0)\cosh(hx)}.
\end{equation}
The same values also satisfy
\begin{equation}\label{eq:positiveLratio}
 A_n(h,u)
 =(-1)^n(2n+1)!
 \left(\frac{2h}{\pi}\right)^{2n}
 \frac{L(2n+2,b_{h,u^\vee})}{L(2,b_{h,u^\vee})},
\end{equation}
and, as $n\to\infty$,
\begin{equation}\label{eq:cuspAsymptotic}
 A_n(h,u)
 =\frac{(-1)^n(2n+1)!}{L(2,b_{h,u^\vee})}
 \left(\frac{2h}{\pi}\right)^{2n}
 \left(1+O(3^{-2n-2})\right).
\end{equation}
\end{theorem}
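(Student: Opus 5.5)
The plan is to derive everything from Proposition~\ref{prop:generalBernoulli} applied to $b=b_{h,u}$ with $M=4h$ and $\mu_b=0$ (Lemma~\ref{lem:cusp-parameters}), and then to invoke \eqref{eq:negativeodd} for the positive-side statements.

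\textbf{Reciprocity.} First prove \eqref{eq:reciprocalP} from the symmetry $T_{h-1-k}-T_k=\tfrac{h(h-1)}2-hk$. Since $h$ is even, $u$ is odd and $h-1$ is odd, the factor $\lambda^{h(h-1)/2}=e(u(h-1)/2)=-1$, while $\lambda^{-hk}=1$. Hence $\lambda^{T_{h-1-k}}=-\lambda^{T_k}$. Reindexing $k\mapsto h-1-k$ in $Y^{h-1}\mathcal P_{h,u}(Y^{-1})$ gives \eqref{eq:reciprocalP}. Substituting $Y=e^{2x}$, this says exactly that $\mathcal R_{h,u}(-x)=-\mathcal R_{h,u}(x)$, so $\mathcal R_{h,u}$ is odd.

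\textbf{Generating function.} Next compute the Bernoulli side. The odd residues $r=2k+1$ modulo $4h$ have $k=0,\dots,2h-1$, and the sign relation $b(2(k+h)+1)=-b(2k+1)$ from Lemma~\ref{lem:cusp-parameters} folds the sum into
\[
 \sum_{r=1}^{4h}b(r)e^{rx}=e^{x}\mathcal P_{h,u}(e^{2x})\bigl(1-e^{2hx}\bigr).
\]
Dividing by $e^{4hx}-1=(e^{2hx}-1)(e^{2hx}+1)$ gives
\[
 -\frac{e^{x}\mathcal P_{h,u}(e^{2x})}{e^{2hx}+1}=-\frac{\mathcal R_{h,u}(x)}{2\cosh(hx)}.
\]
Proposition~\ref{prop:generalBernoulli} with $\mu_b=0$ then yields \eqref{eq:Lcuspegf}, up to an overall sign. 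I expect this sign bookkeeping to be the main obstacle. I would check it at $h=2$ directly, or re-derive the sign convention of the proposition from $\zeta(1-j,a)=-B_j(a)/j$; either way, \eqref{eq:Lcuspegf} is fixed consistently with the $h=2$ check.

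\textbf{Special value and the quotient.} Comparing coefficients of $x$, and using $\cosh(0)=1$, gives $L(-1,b)=\tfrac12\mathcal R'_{h,u}(0)$. Differentiating $\mathcal R_{h,u}$ gives $\mathcal R'_{h,u}(0)=(1-h)\mathcal P_{h,u}(1)+2\mathcal P'_{h,u}(1)$. Setting $Y=1$ in \eqref{eq:reciprocalP} forces $\mathcal P_{h,u}(1)=0$, which proves \eqref{eq:LminusoneP}. Dividing \eqref{eq:Lcuspegf} by $L(-1,b)$, which is nonzero by Theorem~\ref{thm:nonvanishing}, gives \eqref{eq:Acuspegf}.

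\textbf{Positive-side formula and asymptotics.} For \eqref{eq:positiveLratio}, take the ratio of \eqref{eq:negativeodd} at $n$ and at $0$. The constant $C_\alpha$ cancels, and the powers combine as $(-1)^{n+1}/(-1)=(-1)^n$, $(2h)^{2n+1}/(2h)=(2h)^{2n}$ and $\pi^{2n+2}/\pi^2=\pi^{2n}$. The dual values are nonzero by the estimate in the proof of Theorem~\ref{thm:nonvanishing}. Finally, \eqref{eq:cuspAsymptotic} follows from $L(2n+2,b_{h,u^\vee})=1+E_n$ with
\[
 |E_n|\le\sum_{m\ge3,\ m\ \mathrm{odd}}m^{-2n-2}=3^{-2n-2}\bigl(1+O((3/5)^{2n})\bigr)=O(3^{-2n-2}),
\]
since $b(1)=1$ and $|b|\le1$.
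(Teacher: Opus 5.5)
Your proposal is correct and follows the paper's proof essentially step for step. You use the reflection $k\mapsto h-1-k$ for \eqref{eq:reciprocalP}, fold the $4h$-periodic sum in Proposition~\ref{prop:generalBernoulli} via $b(2(k+h)+1)=-b(2k+1)$, read off the $x$-coefficient using $\mathcal P_{h,u}(1)=0$, and divide \eqref{eq:negativeodd} by its $n=0$ case.

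The sign you hedge on is not actually open. Your computed identity $\sum_r b(r)e^{rx}/(e^{4hx}-1)=-\mathcal R_{h,u}(x)/(2\cosh hx)$, combined with the minus sign in \eqref{eq:generalBernoulli}, gives \eqref{eq:Lcuspegf} exactly. This agrees with $L(-1,\chi_8)=-1$ at $h=2$.
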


\begin{proof}
Direct calculation gives
\[
 T_{h-1-k}-T_k=\frac h2(h-1-2k).
\]
The second factor is odd and $\lambda^{h/2}=-1$.  Hence
$\lambda^{T_{h-1-k}}=-\lambda^{T_k}$, which proves
\eqref{eq:reciprocalP} and the oddness of $\mathcal R_{h,u}$.

Apply \eqref{eq:generalBernoulli} with period $4h$ and zero mean.
Parametrizing the odd residues by $2k+1$ gives
\[
 \sum_{n\geq0}L(-2n-1,b_{h,u})\frac{x^{2n+1}}{(2n+1)!}
 =-\frac{\sum_{k=0}^{2h-1}\lambda^{T_k}e^{(2k+1)x}}
 {e^{4hx}-1}.
\]
Since $\lambda^{T_{k+h}}=-\lambda^{T_k}$, cancellation yields
\[
 \frac{\sum_{k=0}^{h-1}\lambda^{T_k}e^{(2k+1)x}}
 {e^{2hx}+1}
 =\frac{\mathcal R_{h,u}(x)}{2\cosh(hx)},
\]
which proves \eqref{eq:Lcuspegf}.  Comparison of coefficients of $x$
gives the second equality in \eqref{eq:LminusoneP}.  Since
\eqref{eq:reciprocalP} gives $\mathcal P_{h,u}(1)=0$, differentiation
of \eqref{eq:PRdef} gives the first.  Theorem~\ref{thm:nonvanishing}
allows division by this value and proves \eqref{eq:Acuspegf}.  Finally,
divide \eqref{eq:negativeodd} by its case $n=0$; the Gauss factor
cancels and gives \eqref{eq:positiveLratio}.  Since
$L(s,b_{h,u^\vee})=1+O(3^{-s})$ for real $s\to+\infty$,
\eqref{eq:cuspAsymptotic} follows.
\end{proof}

\subsection{Cyclotomic field and Galois covariance}

Put $\xi_h:=e(1/h)$.  We normalize the Bernoulli polynomials by
\[
 \frac{ze^{yz}}{e^z-1}
 =\sum_{j\geq0}B_j(y)\frac{z^j}{j!}.
\]

\begin{theorem}\label{thm:galois-cusp-values}
Let $h$ be even, $u\in(\Z/h\Z)^\times$, and $n\geq0$.  Then
\begin{equation}\label{eq:field-cusp-values}
 L(-2n-1,b_{h,u})\in\Q(\xi_h),
 \qquad
 A_n(h,u)\in\Q(\xi_h).
\end{equation}
If $v\in(\Z/h\Z)^\times$ and
$\sigma_v(\xi_h)=\xi_h^v$, then
\begin{align}
 \sigma_v\bigl(L(-2n-1,b_{h,u})\bigr)
 &=L(-2n-1,b_{h,vu}),\label{eq:galois-L}\\
 \sigma_v\bigl(A_n(h,u)\bigr)
 &=A_n(h,vu).\label{eq:galois-A}
\end{align}
Thus Galois conjugation carries the radial value at
$-e(u/h)$ to the radial value at $-e(vu/h)$.
\end{theorem}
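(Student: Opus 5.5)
The plan is to obtain an explicit finite formula for $L(-2n-1,b_{h,u})$ as a $\Q$-linear combination of the values $\lambda^{T_k}$, where $\lambda=e(u/h)=\xi_h^u$. Both assertions then follow formally. The natural source is the Hurwitz decomposition \eqref{eq:Hurwitzdecomp} with $M=4h$, together with the classical evaluation $\zeta(1-j,y)=-B_j(y)/j$. Taking $j=2n+2$, this gives
\[
 L(-2n-1,b_{h,u})=-\frac{(4h)^{2n+1}}{2n+2}\sum_{k=0}^{2h-1}\lambda^{T_k}\,B_{2n+2}\!\left(\frac{2k+1}{4h}\right),
\]
where only the odd residues $r=2k+1$ contribute and $b_{h,u}(2k+1)=\lambda^{T_k}$ by \eqref{eq:b-lambda}.

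As a cross-check, or as an alternative route, one can read the same formula off \eqref{eq:Lcuspegf}. The right-hand side there is $\sum_{k<h}\lambda^{T_k}e^{(2k+1)x}/(e^{2hx}+1)$, and its Taylor coefficients are $\Q$-combinations of the $\lambda^{T_k}$. Indeed, $1/(e^{2hx}+1)$ has rational Taylor coefficients, and so does each $e^{(2k+1)x}$.

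Because the Bernoulli polynomials have rational coefficients and $(2k+1)/(4h)\in\Q$, every coefficient $c_{n,k}\in\Q$. Hence $L(-2n-1,b_{h,u})=\sum_k c_{n,k}\,\xi_h^{uT_k}\in\Q(\xi_h)$. The key point is that the $c_{n,k}$ depend only on $h$, $n$ and $k$, and not on $u$. The ratio $A_n(h,u)$ then lies in $\Q(\xi_h)$, because Theorem~\ref{thm:nonvanishing} guarantees that the denominator $L(-1,b_{h,u})$ is nonzero.

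For the Galois law, let $\sigma_v$ act. It fixes each rational $c_{n,k}$ and sends $\xi_h^{uT_k}\mapsto\xi_h^{vuT_k}$. The image is therefore the same formula with $u$ replaced by $vu$. Since $v,u\in(\Z/h\Z)^\times$, we have $vu\in(\Z/h\Z)^\times$, so $b_{h,vu}$ is a legitimate member of the family. This gives \eqref{eq:galois-L}. There is one point to confirm here: by Lemma~\ref{lem:cusp-parameters} and \eqref{eq:descended-notation}, $b_{h,vu}$ depends only on $vu$ modulo $h$, and indeed $\xi_h^{vuT_k}$ depends only on $vu\bmod h$. Applying $\sigma_v$ to numerator and denominator separately, and using that $\sigma_v$ is a field automorphism, gives \eqref{eq:galois-A}. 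The final sentence of the theorem is then a restatement via \eqref{eq:finitecusp}: the radial value at $\zeta=-e(u/h)$ is $A_n(h,u)$.

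I expect the main obstacle to be bookkeeping rather than any real difficulty. One must make sure the coefficients are genuinely independent of $u$ and rational, with no hidden $u$-dependence through lift choices or through the Gauss constant $C_\alpha$. This is why I would avoid \eqref{eq:negativeodd} and \eqref{eq:positiveLratio}. They involve the transcendental factor $\pi^{-2n-2}$ and the Gauss phase, which do not transform simply under $\sigma_v$. The route through \eqref{eq:Lcuspegf} or the Hurwitz/Bernoulli formula keeps everything manifestly in $\Q[\lambda]$.
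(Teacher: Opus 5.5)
Your proposal is correct and follows the paper's proof essentially verbatim. Like the paper, you use the Hurwitz--Bernoulli formula \eqref{eq:finite-Bernoulli-cusp} to write $L(-2n-1,b_{h,u})$ as a combination of the values $b_{h,u}(r)=\xi_h^{uT_k}$ with rational, $u$-independent coefficients, invoke Theorem~\ref{thm:nonvanishing} to divide for $A_n$, and apply $\sigma_v$ termwise for both Galois laws; your optional cross-check via \eqref{eq:Lcuspegf} is also valid but not needed.
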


\begin{proof}
For $M=4h$, use the Hurwitz-zeta decomposition and the standard value
\[
 \zeta(-j,y)=-\frac{B_{j+1}(y)}{j+1}.
\]
They give
\begin{equation}\label{eq:finite-Bernoulli-cusp}
 L(-2n-1,b_{h,u})
 =-\frac{M^{2n+1}}{2n+2}
 \sum_{r=1}^{M}b_{h,u}(r)
 B_{2n+2}\left(\frac rM\right).
\end{equation}
The Bernoulli-polynomial values are rational and
$b_{h,u}(r)\in\Q(\xi_h)$, which proves the first inclusion in
\eqref{eq:field-cusp-values}.  The second follows from
Theorem~\ref{thm:nonvanishing}.  Since
\[
 \sigma_v\bigl(b_{h,u}(r)\bigr)=b_{h,vu}(r),
\]
termwise application of $\sigma_v$ to
\eqref{eq:finite-Bernoulli-cusp} proves \eqref{eq:galois-L}; applying
this identity to the numerator and denominator of \eqref{eq:defAhu}
proves \eqref{eq:galois-A}.
\end{proof}

For $h=2$, the field in \eqref{eq:field-cusp-values} is $\Q$.  For
$h=4$ and $u=1$, one has
\[
 \mathcal P_{4,1}(Y)=1+iY-iY^2-Y^3,
 \qquad
 \mathcal R_{4,1}(x)=-2\bigl(\sinh(3x)+i\sinh x\bigr).
\]
Consequently, Theorem~\ref{thm:cuspegf} gives the nonrational example
\begin{equation}\label{eq:hfour-egf}
 \sum_{n\geq0}A_n(4,1)\frac{x^{2n+1}}{(2n+1)!}
 =\frac{\sinh(3x)+i\sinh x}{(3+i)\cosh(4x)},
 \qquad
 A_1(4,1)=\frac{-199-12i}{5}.
\end{equation}
Complex conjugation is $\sigma_3$ on $\Q(i)$, so
\begin{equation}\label{eq:hfour-conjugation}
 A_n(4,3)=\overline{A_n(4,1)}.
\end{equation}
In particular, $A_1(4,3)=(-199+12i)/5$.  The theorem gives a field of
definition and the exact Galois action; it
does not assert that $\Q(\xi_h)$ is always minimal.

\subsection{The principal radial value}

For $\zeta=1$, one has $h=2$, $u=1$, and
$\mathcal R_{2,1}(x)=-2\sinh x$.  If
\[
 s_n:=A_n(2,1),
\]
then Theorem~\ref{thm:cuspegf} gives
\begin{equation}\label{eq:SpringerEGF}
 \sum_{n\geq0}s_n\frac{x^{2n+1}}{(2n+1)!}
 =\frac{\sinh x}{\cosh2x}.
\end{equation}
These are the signed odd Springer numbers, a classical sequence
\cite{ArnoldSpringer,Hoffman,SokalSpringer}.
The first two terms are $s_0=1$ and $s_1=-11$.

\section{Exact coefficient contents and 2-adic interpolation}\label{sec:structure}

Having determined the boundary values of $W_n=N_n/D$, we now study the
same quotients coefficientwise.  The common denominator continues to link
the moment family through exact divisibility and $2$-adic interpolation.
The principal radial values already give the first arithmetic statement.

\begin{corollary}\label{cor:principal}
For every $n\geq1$,
\begin{equation}\label{eq:exact2adic}
 \nu_2(s_n-1)=\nu_2(n)+2.
\end{equation}
\end{corollary}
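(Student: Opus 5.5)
The plan is to work directly with the exponential generating function \eqref{eq:SpringerEGF} and turn it into a linear recurrence, rather than going through $2$-adic $L$-functions. Multiplying \eqref{eq:SpringerEGF} by $\cosh 2x$ and comparing coefficients of $x^{2n+1}/(2n+1)!$ gives, for every $n\geq0$,
\[
 \sum_{j=0}^{n}\binom{2n+1}{2j}4^{j}s_{n-j}=1,
 \qquad\text{equivalently}\qquad
 s_n-1=-\sum_{j=1}^{n}\binom{2n+1}{2j}4^{j}s_{n-j}.
\]
Two consequences follow from this display. It shows by induction that every $s_k$ is an integer. Reducing it modulo $2$ shows $s_n\equiv s_0=1\pmod 2$, so every $s_k$ is odd.

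The main step is a term-by-term $2$-adic estimate of the right-hand side. For $j=1$ the term is $4\binom{2n+1}{2}s_{n-1}=4n(2n+1)s_{n-1}$. Since $2n+1$ and $s_{n-1}$ are odd, its valuation is exactly $\nu_2(n)+2$. For $j\geq2$ I use
\[
 \binom{2n+1}{2j}=\frac{2n+1}{2j}\binom{2n}{2j-1}
 =\frac{(2n+1)\,2n}{2j(2j-1)}\binom{2n-1}{2j-2}.
\]
This gives $\nu_2\binom{2n+1}{2j}\geq\nu_2(n)-\nu_2(j)$. Hence the $j$-th term has valuation at least $\nu_2(n)+2j-\nu_2(j)$.

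It then remains to check the elementary inequality $2j-\nu_2(j)\geq3$ for all $j\geq2$. Since $\nu_2(j)\leq\log_2 j$, this holds: at $j=2$ the value is $3$, and the function only grows for larger $j$. So every term with $j\geq2$ is divisible by $2^{\nu_2(n)+3}$. The single term $j=1$ has valuation exactly $\nu_2(n)+2$. Therefore $\nu_2(s_n-1)=\nu_2(n)+2$, which is \eqref{eq:exact2adic}. As a sanity check, $s_1-1=-12$ has $\nu_2=2$.

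The only delicate point I expect is the uniform lower bound for the binomial valuations when $j\geq2$, in particular the boundary case $j=2$, where the margin is exactly one power of $2$. I would verify that case by hand. An alternative route would use the identification $s_n=L(-2n-1,\chi_8)/L(-1,\chi_8)$ together with $2$-adic interpolation of $L(1-k,\chi_8)$ in $k$. I would avoid it: the valuation $\nu_2(5^{2n}-1)=\nu_2(n)+3$ would need an extra unit-factor analysis to land on $\nu_2(n)+2$, and the recurrence argument sidesteps this.
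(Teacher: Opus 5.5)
Your proposal is correct and matches the paper's proof essentially step for step. Both arguments multiply the exponential generating function by $\cosh 2x$ to obtain the recurrence, deduce that every $s_k$ is odd, and note that the $j=1$ term has valuation exactly $\nu_2(n)+2$. For the remaining terms, both use the identity $\binom{2n+1}{2j}=\frac{(2n+1)n}{j(2j-1)}\binom{2n-1}{2j-2}$ together with $2j-\nu_2(j)\geq 3$ for $j\geq2$, which shows they are divisible by $2^{\nu_2(n)+3}$.
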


\begin{proof}
Multiplication of \eqref{eq:SpringerEGF} by $\cosh2x$ gives
\begin{equation}\label{eq:principalrec}
 s_n=1-\sum_{j=1}^{n}\binom{2n+1}{2j}4^js_{n-j}.
\end{equation}
This recurrence first shows inductively that every $s_n$ is odd.  Its
$j=1$ term has valuation
\[
 \nu_2\left(4\binom{2n+1}{2}s_{n-1}\right)
 =2+\nu_2(n).
\]
For $j\geq2$, the identity
\[
 \binom{2n+1}{2j}
 =\frac{(2n+1)n}{j(2j-1)}
 \binom{2n-1}{2j-2}
\]
gives
\[
 \nu_2\binom{2n+1}{2j}
 \geq\nu_2(n)-\nu_2(j).
\]
Since $2j-\nu_2(j)\geq3$, every remaining term in
$s_n-1$ is divisible by $2^{3+\nu_2(n)}$.  The $j=1$ term therefore
determines the exact valuation.
\end{proof}

We shall also use two structural identities, valid formally at $q=0$
and analytically wherever the quotients are defined.  Put
\[
 \Theta:=8q\frac{d}{dq}.
\]
Since $8T_k=(2k+1)^2-1$, termwise differentiation gives, for $n\geq0$,
\begin{equation}\label{eq:thetaNidentity}
 \Theta N_n=N_{n+1}-N_n.
\end{equation}
In particular, $\Theta D=N_1-D$.  The quotient rule applied to
$W_n=N_n/D$ then gives
\begin{equation}\label{eq:Wrecurrence}
 W_{n+1}=\Theta W_n+W_1W_n,
 \qquad W_0=1.
\end{equation}

\subsection{Exact fixed divisors}\label{sec:congruences}

For a series $F(q)\in\Z[[q]]$, not identically zero, let $\cont(F)$ denote the positive generator of the ideal generated by its coefficients.
For background on fixed divisors, see \cite{CahenChabert}.

\begin{theorem}\label{thm:content}
For $n\geq1$, define
\begin{equation}\label{eq:defgn}
 g_n:=
 2^{3+\nu_2(n)}
 \prod_{\substack{p\ \mathrm{odd\ prime}\\p-1\mid2n}}p.
\end{equation}
Then
\begin{equation}\label{eq:content}
 \cont(W_n-1)=g_n.
\end{equation}
In particular,
\[
 W_n(q)\equiv1\pmod{g_n}
\]
coefficientwise, and the modulus is best possible. Moreover,
\begin{equation}\label{eq:BernoulliDen}
 g_n=2^{2+\nu_2(n)}\den(B_{2n}).
\end{equation}
Here the Bernoulli numbers are normalized by
$x/(e^x-1)=\sum_{j\geq0}B_jx^j/j!$.
\end{theorem}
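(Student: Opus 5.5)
We prove divisibility $g_n\mid\cont(W_n-1)$ and optimality separately, and treat \eqref{eq:BernoulliDen} at the end.

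\textbf{Divisibility.} Write $W_n-1=(N_n-D)/D$. Since $D\in 1+q\Z[[q]]$ is a unit in $\Z[[q]]$, we get $\cont(W_n-1)=\cont(N_n-D)$. From \eqref{eq:defN},
\[
 N_n-D=\sum_{k\geq0}(-1)^{T_k}m\,(m^{2n}-1)q^{T_k},\qquad m=2k+1.
\]
The exponents $T_k$ are distinct, so $\cont(N_n-D)$ is exactly the gcd over odd $m\geq1$ of $m(m^{2n}-1)$. It therefore suffices to compute $\gcd_{m\text{ odd}}\, m(m^{2n}-1)$.

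\textbf{The $2$-part.} For odd $m$, the lifting-the-exponent lemma gives
\[
 \nu_2(m^{2n}-1)=\nu_2(m^2-1)+\nu_2(n)\geq 3+\nu_2(n),
\]
with equality at $m=3$, since $\nu_2(8)=3$. The factor $m$ is odd, so the $2$-part of the gcd is $2^{3+\nu_2(n)}$.

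\textbf{Odd primes.} Fix an odd prime $p$. If $p-1\mid 2n$, then every $m$ satisfies $p\mid m(m^{2n}-1)$: either $p\mid m$, or Fermat gives $m^{2n}\equiv1\pmod p$. Conversely, choose odd $m$ with $m\equiv g\pmod{p^2}$, where $g$ is a primitive root modulo $p^2$. Then $p\nmid m$. If $p-1\nmid 2n$, then $g^{2n}\not\equiv1\pmod p$, so $p\nmid m(m^{2n}-1)$ and $p$ does not divide the gcd.

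It remains to show $p^2$ never divides the gcd when $p-1\mid 2n$. For this take $m=p$: then $m(m^{2n}-1)=p(p^{2n}-1)$ has $p$-adic valuation exactly $1$. Combining the $2$-part with the odd primes gives $\cont=g_n$.

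\textbf{Bernoulli form.} Equation \eqref{eq:BernoulliDen} follows from von Staudt--Clausen, $\den(B_{2n})=\prod_{(p-1)\mid 2n}p$, where the product includes $p=2$. Pulling out that single factor of $2$ gives $2^{3+\nu_2(n)}\cdot\frac12=2^{2+\nu_2(n)}\cdot$, which matches $g_n=2^{2+\nu_2(n)}\den(B_{2n})$.

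\textbf{Main obstacle.} The only delicate point is justifying $\cont(F/D)=\cont(F)$. This holds because $F=(F/D)\cdot D$ and $F/D=F\cdot D^{-1}$ with $D^{-1}\in\Z[[q]]$, so each content divides the other. Otherwise the argument is elementary; the optimality witnesses are $m=3$ for the $2$-part and $m=p$ together with a primitive-root residue for the odd primes.
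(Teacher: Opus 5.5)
Your proof is correct and follows the paper's argument essentially step for step. Both use invariance of content under the unit $D$, and reduce to the fixed divisor $\gcd_{m\ \mathrm{odd}} m(m^{2n}-1)$. Both use lifting the exponent with witness $m=3$ for the $2$-part, the criterion $p-1\mid 2n$ with witness $m=p$ for the exact odd-prime exponent, and von Staudt--Clausen for \eqref{eq:BernoulliDen}.

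Two points are cosmetic only. A primitive root modulo $p^2$ is more than you need; any odd representative of a generator of $\mathbb F_p^\times$ suffices. Also, the garbled expression in your Bernoulli paragraph should simply read $2^{2+\nu_2(n)}\den(B_{2n})=2^{3+\nu_2(n)}\prod_{p\ \mathrm{odd},\ p-1\mid 2n}p=g_n$.
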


\begin{proof}
Multiplication by a unit in $\Z[[q]]$ does not change coefficient content: both the unit and its inverse have integral coefficients. Since
\[
 D(W_n-1)=N_n-D,
\]
the required content is therefore
\[
 \gcd_{\substack{m\geq1\\m\ \mathrm{odd}}}(m^{2n+1}-m).
\]
For odd $m$, the lifting-the-exponent lemma gives
\[
 \nu_2(m^{2n}-1)=\nu_2(m^2-1)+\nu_2(n)\geq3+\nu_2(n),
\]
with equality at $m=3$. This proves the exact power of $2$ in \eqref{eq:defgn}.

Let $p$ be an odd prime. Odd integers represent every residue class modulo $p$. Thus $p$ divides $m^{2n+1}-m$ for every odd $m$ precisely when
\[
 x^{2n}=1\qquad(x\in\mathbb F_p^\times),
\]
which is equivalent to $p-1\mid2n$. Such a prime occurs only to the first power in the universal gcd, because the admissible choice $m=p$ gives
\[
 \nu_p(p^{2n+1}-p)=1.
\]
This proves \eqref{eq:content}. Formula \eqref{eq:BernoulliDen} is the von Staudt--Clausen description
\[
 \den(B_{2n})=\prod_{p-1\mid2n}p;
\]
see \cite[Chapter~12]{ApostolANT}.
\end{proof}

\begin{theorem}\label{thm:pairwise}
Let $m,n\geq0$ with $m\neq n$, and put
\[
 a:=\min(m,n),\qquad d:=|m-n|.
\]
Then
\begin{equation}\label{eq:pairwise}
 \cont(W_n-W_m)
 =
 2^{3+\nu_2(d)}
 \prod_{\substack{p\ \mathrm{odd\ prime}\\p-1\mid2d}}
 p^{\min(2a+1,\,1+\nu_p(d))}.
\end{equation}
\end{theorem}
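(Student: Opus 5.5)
The plan is to reduce the content to an elementary gcd, exactly as in the proof of Theorem~\ref{thm:content}, and then compute that gcd one prime at a time. Since $D\in\Z[[q]]$ has $D(0)=1$, it is a unit of $\Z[[q]]$, and multiplying by a unit does not change content. From $D(W_n-W_m)=N_n-N_m$ and the odd-index form \eqref{eq:oddform}, whose exponents $(k^2-1)/8$ are distinct for distinct odd $k\ge1$, we get
\[
 \cont(W_n-W_m)=\gcd_{\substack{k\geq1\\k\ \mathrm{odd}}}\bigl(k^{2n+1}-k^{2m+1}\bigr)
 =\gcd_{\substack{k\geq1\\k\ \mathrm{odd}}} k^{2a+1}\bigl(k^{2d}-1\bigr),
\]
up to sign. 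For every prime $p$, I will compute the minimum of $\nu_p$ over odd $k$.

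\emph{The prime $2$.} Here $k^{2a+1}$ is odd. The lifting-the-exponent lemma gives $\nu_2(k^{2d}-1)=\nu_2(k^2-1)+\nu_2(d)\geq 3+\nu_2(d)$. Equality holds at $k=3$, since $\nu_2(8)=3$. This gives the factor $2^{3+\nu_2(d)}$.

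\emph{Odd primes $p$ with $p-1\nmid 2d$.} Choose an integer prime to $p$ whose $2d$-th power is not $1$ modulo $p$; such an integer exists because $\mathbb F_p^\times$ is cyclic of order $p-1$. Adjust it by $p$ to make it odd. Then $p$ divides neither factor, so $p$ does not divide the gcd.

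\emph{Odd primes $p$ with $p-1\mid 2d$.} There are two kinds of odd $k$.
\begin{itemize}
\item If $p\mid k$, then $k^{2d}-1$ is a $p$-adic unit, so the valuation is $\nu_p(k^{2a+1})\geq 2a+1$. Equality holds at $k=p$.
\item If $p\nmid k$, then $k^{p-1}\equiv1\pmod p$. Write $k^{2d}=(k^{p-1})^{2d/(p-1)}$ and apply the lifting-the-exponent lemma: $\nu_p(k^{2d}-1)=\nu_p(k^{p-1}-1)+\nu_p\bigl(2d/(p-1)\bigr)$. Since $p$ is odd and $p\nmid p-1$, the second term is $\nu_p(d)$. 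Hence the valuation is at least $1+\nu_p(d)$. Equality holds when $\nu_p(k^{p-1}-1)=1$, for example when $k$ is a primitive root modulo $p^2$. Adding $p^2$ if necessary makes this $k$ odd.
\end{itemize}
The minimum valuation is therefore $\min(2a+1,1+\nu_p(d))$, which matches \eqref{eq:pairwise}.

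No step is a serious obstacle. The point needing the most care is showing that the exponent $1+\nu_p(d)$ is actually attained. This requires that $\nu_p(k^{p-1}-1)$ can be made exactly $1$ while $k$ stays odd, and the primitive-root-modulo-$p^2$ choice handles this. Once each prime-by-prime minimum is attained by some odd $k$, the gcd has exactly the stated valuation at every prime, and only finitely many primes occur. This gives \eqref{eq:pairwise}.
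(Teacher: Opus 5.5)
Your proof is correct and follows the paper's argument. You reduce via the unit $D$ to the fixed divisor of $x^{2a+1}(x^{2d}-1)$ over odd $x$, then compute prime by prime: lifting-the-exponent at $2$ with witness $x=3$, an odd lift of an element of $\mathbb F_p^\times$ with $x^{2d}\neq1$ when $p-1\nmid 2d$, and, when $p-1\mid 2d$, the witnesses $x=p$ for the nonunit case and a primitive root for the unit case. The only cosmetic difference is in that last step: you get the unit minimum $1+\nu_p(d)$ by LTE applied to $(x^{p-1})^{2d/(p-1)}$ with a primitive root modulo $p^2$, whereas the paper uses cyclicity of $(\Z/p^s\Z)^\times$ and a primitive root modulo $p^{e+1}$; both are valid.
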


\begin{proof}
By symmetry, assume $n>m$.  Then $a=m$ and $n=m+d$.
Multiplication by the unit $D$ reduces the content to the fixed divisor
over odd integers of
\[
 f(x):=x^{2a+1}(x^{2d}-1).
\]
For $p=2$, the first factor is odd and
\[
 \min_{x\ \mathrm{odd}}\nu_2(x^{2d}-1)
 =\min_{x\ \mathrm{odd}}\{\nu_2(x^2-1)+\nu_2(d)\}
 =3+\nu_2(d),
\]
with equality at $x=3$.

Let $p$ be odd. If $p-1\nmid2d$, choose a primitive residue modulo $p$ and then an odd representative; its contribution has valuation zero. Suppose that $p-1\mid2d$. If $p\mid x$, the minimum is $2a+1$, attained at $x=p$. For $p\nmid x$, every unit satisfies $x^{2d}\equiv1\pmod p$. The largest integer $s$ for which
\[
 x^{2d}\equiv1\pmod{p^s}
\quad\text{for every }x\in(\Z/p^s\Z)^\times
\]
is determined by
\[
 p^{s-1}(p-1)\mid2d.
\]
Indeed, for odd $p$ the group $(\Z/p^s\Z)^\times$ is cyclic of order
$p^{s-1}(p-1)$, and primitive roots exist modulo every odd prime power
\cite[Chapter~10]{ApostolANT}.  Hence the minimum over units is
$e:=1+\nu_p(d)$.  Choose a primitive root modulo $p^{e+1}$ and an odd
representative of its class.  Its
contribution has valuation exactly $e$.  Taking the smaller of the unit
and nonunit minima proves \eqref{eq:pairwise}.
\end{proof}

The odd-prime part of \eqref{eq:pairwise} need not be squarefree.  Its
$2$-primary part extends from integral moments to an analytic family.

\subsection{A coefficientwise 2-adic moment family}

For $v\in1+8\Z_2$ and $s\in\Z_2$, define
\begin{equation}\label{eq:twoadic-power}
 v^s:=\exp\bigl(s\log v\bigr).
\end{equation}
The logarithm and exponential converge, and this convention agrees with
the usual power when $s$ is a nonnegative integer.

\begin{theorem}\label{thm:twoadic-family}
For $s\in\Z_2$, define coefficientwise
\begin{equation}\label{eq:twoadic-family}
 \mathcal N_s(q):=
 \sum_{\substack{m\geq1\\m\ \mathrm{odd}}}
 \chi_8(m)m(m^2)^s q^{(m^2-1)/8},
 \qquad
 \mathcal W_s(q):=\frac{\mathcal N_s(q)}{D(q)}.
\end{equation}
Then $\mathcal N_s,\mathcal W_s\in\Z_2[[q]]$, and, for every $r\geq0$,
the maps
\[
 s\longmapsto[q^r]\mathcal N_s(q),
 \qquad
 s\longmapsto[q^r]\mathcal W_s(q)
\]
are $2$-adic analytic on $\Z_2$.  For every nonnegative integer $n$,
\begin{equation}\label{eq:twoadic-specialization}
 \mathcal N_n=N_n,
 \qquad
 \mathcal W_n=W_n.
\end{equation}
If $s,t\in\Z_2$ and $s\ne t$, then
\begin{equation}\label{eq:twoadic-congruence}
 \mathcal W_s(q)\equiv\mathcal W_t(q)
 \pmod{2^{3+\nu_2(s-t)}}
\end{equation}
coefficientwise.  The exponent is optimal for every pair $s\ne t$.
Indeed,
\begin{equation}\label{eq:q-coefficient-twoadic}
 [q]\mathcal W_s(q)=3(1-9^s),
 \qquad
 \nu_2\bigl([q](\mathcal W_s-\mathcal W_t)\bigr)
 =3+\nu_2(s-t).
\end{equation}
\end{theorem}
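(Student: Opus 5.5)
First I would show that the coefficients are well defined and $2$-adically analytic. For odd $m$ one has $m^2\in1+8\Z_2$, so $\log(m^2)\in8\Z_2$. Hence $(m^2)^s=\exp(s\log m^2)$ is given by a power series in $s$ that converges on all of $\Z_2$, and all of its values lie in $1+8\Z_2$. The coefficient $[q^r]\mathcal N_s$ is a finite sum, over the odd $m$ with $(m^2-1)/8=r$, of such analytic functions with coefficients in $\Z$. So each coefficient is analytic and takes values in $\Z_2$. Since $D\in1+q\Z[[q]]$ is a unit in $\Z_2[[q]]$, each coefficient of $\mathcal W_s=\mathcal N_s D^{-1}$ is a finite $\Z$-linear combination of coefficients of $\mathcal N_s$. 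It is therefore again analytic in $s$ and $\Z_2$-valued. The specialization \eqref{eq:twoadic-specialization} follows because $\exp(n\log v)=v^n$ for nonnegative integers $n$; comparing with \eqref{eq:oddform} gives $\mathcal N_n=N_n$, and dividing by $D$ gives $\mathcal W_n=W_n$.

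For the congruence \eqref{eq:twoadic-congruence}, I would argue as in Theorem~\ref{thm:content}. Multiplication by the unit $D$ preserves coefficientwise divisibility by $2^k$, so it suffices to show
\[
 \nu_2\bigl((m^2)^s-(m^2)^t\bigr)\geq3+\nu_2(s-t)
\]
for every odd $m$. Write $(m^2)^s-(m^2)^t=(m^2)^t\bigl((m^2)^{s-t}-1\bigr)$. The first factor is a unit. For the second factor, set $y=(s-t)\log m^2$, so that $\nu_2(y)\geq3+\nu_2(s-t)$. Since $\nu_2(y)\geq3$, the standard estimate gives $\nu_2(\exp(y)-1)=\nu_2(y)$. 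This proves \eqref{eq:twoadic-congruence}.

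For optimality, I would compute the $q^1$ coefficient. Only $m=1$ and $m=3$ contribute up to order $q$, since $(m^2-1)/8$ equals $0$ and $1$ for these. Using $\chi_8(3)=-1$, this gives
\[
 \mathcal N_s=1-3\cdot9^sq+O(q^2),\qquad D=1-3q+O(q^2).
\]
Therefore
\[
 \mathcal W_s=\bigl(1-3\cdot9^sq\bigr)\bigl(1+3q\bigr)+O(q^2),
\]
which yields $[q]\mathcal W_s=3(1-9^s)$. The difference of these coefficients for $s$ and $t$ is $3\cdot9^t(1-9^{s-t})$. We have $\nu_2(\log9)=3$, and by the same exact estimate $\nu_2(9^{s-t}-1)=3+\nu_2(s-t)$. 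This establishes \eqref{eq:q-coefficient-twoadic} and hence shows that the exponent is optimal for every pair $s\ne t$.

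The only delicate step is the $2$-adic valuation identity $\nu_2(\exp y-1)=\nu_2(y)$ for $\nu_2(y)\geq2$, together with the fact that $\nu_2(\log v)=\nu_2(v-1)$ for $v\in1+4\Z_2$. I would prove this directly from the series. For $k\geq2$ one has $\nu_2(y^k/k!)\geq k\nu_2(y)-(k-1)>\nu_2(y)$, so the higher terms are strictly smaller $2$-adically. The analogous bound holds for the logarithm series. With this in place, the analyticity claim, the congruence, and the optimality statement all follow.
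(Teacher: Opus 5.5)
Your proposal is correct and follows essentially the same route as the paper. It gets analyticity from the restricted power series $\exp(s\log m^2)$ and the finite convolution with the integral coefficients of $D^{-1}$, the congruence from $\nu_2(v^x-1)=\nu_2(x)+\nu_2(\log v)$ with $v=m^2$, and optimality from $[q]\mathcal W_s=3(1-9^s)$ together with $\nu_2(\log 9)=3$. Your explicit factorization $(m^2)^s-(m^2)^t=(m^2)^t\bigl((m^2)^{s-t}-1\bigr)$ and your series estimates for $\exp$ and $\log$ simply spell out steps the paper states more briefly.
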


\begin{proof}
For odd $m$, one has $m^2\in1+8\Z_2$, so $(m^2)^s$ is defined by
\eqref{eq:twoadic-power} and lies in $\Z_2$.  The expansion
\[
 (m^2)^s=\sum_{j\geq0}\frac{\log(m^2)^j}{j!}s^j
\]
is a restricted power series: for $m\ne1$, the numerator in its $j$th
coefficient has valuation at least $3j$, whereas
$\nu_2(j!)\leq j-1$.  For a fixed coefficient index $r$, there is at
most one positive odd $m$ satisfying $m^2=8r+1$.  Hence every
coefficient of $\mathcal N_s$ is analytic in $s$ and belongs to
$\Z_2$.

Write $D^{-1}=\sum_{j\geq0}\beta(j)q^j$ with $\beta(j)\in\Z$.  Then
\begin{equation}\label{eq:formal-division-twoadic}
 [q^r]\mathcal W_s
 =\sum_{j=0}^{r}\beta(r-j)[q^j]\mathcal N_s.
\end{equation}
This finite sum proves integrality, coefficientwise analyticity, and
continuity of formal division by $D$.  When $s=n\geq0$, the identity
$m(m^2)^n=m^{2n+1}$ proves \eqref{eq:twoadic-specialization}.

If $v\in1+8\Z_2$, $x\in\Z_2\setminus\{0\}$, and $v\ne1$, then
\begin{equation}\label{eq:valuation-twoadic-power}
 \nu_2(v^x-1)
 =\nu_2(x)+\nu_2(\log v)
 \geq3+\nu_2(x).
\end{equation}
Indeed, $\nu_2(\log v)=\nu_2(v-1)\geq3$, and
$\nu_2(\exp(y)-1)=\nu_2(y)$ for $\nu_2(y)>1$.  The case $v=1$ is
immediate.  Apply \eqref{eq:valuation-twoadic-power} with $v=m^2$ and
$x=s-t$.  Every coefficient of $\mathcal N_s-\mathcal N_t$ is then
divisible by $2^{3+\nu_2(s-t)}$; multiplication by $D^{-1}$ preserves
this divisibility and proves \eqref{eq:twoadic-congruence}.

Finally,
\[
 D(q)=1-3q+O(q^2),
 \qquad
 \mathcal N_s(q)=1-3\cdot9^s q+O(q^2),
\]
so formal division gives the first identity in
\eqref{eq:q-coefficient-twoadic}.  Since $\nu_2(\log9)=3$,
\eqref{eq:valuation-twoadic-power} is an equality for $v=9$.
The factor $3\cdot9^t$ is a $2$-adic unit, which proves the valuation
identity and optimality.
\end{proof}

The analyticity in Theorem~\ref{thm:twoadic-family} is coefficientwise;
no analogous odd-prime family is asserted.

\section{The dominant interior zero}\label{sec:zeros}

We next keep the same numerators $N_n$ but turn from coefficientwise
arithmetic to the interior zeros of their common denominator $D$; these
zeros control the coefficient asymptotics of every quotient $W_n$ and,
through logarithmic differentiation, the Euler transform studied in the
next section.  The
zeros of partial theta functions can have a delicate geometry; related
questions are studied in \cite{KostovDomain,SokalLeading}. For the present
denominator, a simple Rouch\'e argument determines all zeros in the disc
$|q|<1/2$.

\begin{theorem}\label{thm:dominant-disc}
The function $D$ has exactly one zero, counted with multiplicity, in $|q|<1/2$. It is a positive simple zero $q_0$, and
\begin{equation}\label{eq:q0interval}
 \frac{29}{100}<q_0<\frac{293}{1000}.
\end{equation}
\end{theorem}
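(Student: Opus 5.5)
We would prove Theorem~\ref{thm:dominant-disc} with Rouch\'e's theorem on the circle $|q|=1/2$, comparing $D$ with a short truncation, and then localize the zero by a sign change. From \eqref{eq:defN} with $n=0$ the expansion begins
\[
 D(q)=1-3q-5q^3+7q^6+9q^{10}-11q^{15}-13q^{21}+\cdots,
\]
with exponents $T_k$ and coefficients of modulus $2k+1$. The linear truncation $1-3q$ is too crude: on $|q|=1/2$ one only has $|1-3q|\ge 1/2$, while the tail already exceeds $5/8$. So we take the cubic truncation
\[
 P(q):=1-3q-5q^3.
\]

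\textbf{Step 1 (tail bound).} On $|q|=1/2$,
\[
 |D(q)-P(q)|\le\sum_{k\ge3}(2k+1)2^{-T_k}=\frac7{64}+\frac9{1024}+\frac{11}{2^{15}}+\cdots<\frac18,
\]
where everything after the first three terms is dominated by a geometric series.

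\textbf{Step 2 (lower bound for $P$, the main obstacle).} We need $\min_{|q|=1/2}|P(q)|>1/8$. The expected minimum is about $1.1$, attained near $q=1/2$ where $P(1/2)=-9/8$. To make this rigorous, put $q=\tfrac12e^{i\theta}$ and expand $|P|^2$ as a trigonometric polynomial of degree $3$ in $\theta$. Writing $c=\cos\theta$, this becomes a cubic polynomial $g(c)$, and we bound $g(c)$ from below on $[-1,1]$ by an elementary calculus check or by a subdivision of the interval. The margin is large, so crude estimates will do. If the algebra becomes awkward, a fallback is to bound $|P(q)|\ge|1-3q|-5/8$ on the arc where $|1-3q|$ is large, and to use a direct estimate on the short arc near $\theta=0$.

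\textbf{Step 3 (zero count for $P$).} $P$ is strictly decreasing on $[0,\infty)$, with $P(0)=1>0$ and $P(1/2)<0$, so it has one real root $r\in(0,1/2)$. The remaining roots have product of moduli $|{-1/5}|/r=1/(5r)$. Since $P(0.29)>0$ gives $r>0.29$, each of the two complex-conjugate roots has modulus $(5r)^{-1/2}<1/2$ would require $r>0.8$, which is false; hence both lie outside the disc. So $P$ has exactly one zero in $|q|<1/2$, and by Rouch\'e so does $D$, counted with multiplicity. In particular that zero is simple.

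\textbf{Step 4 (position and interval).} $D$ has real coefficients, so its zeros are closed under complex conjugation. A unique zero in the disc must therefore be real. To place it in the interval \eqref{eq:q0interval}, we show $D(29/100)>0$ and $D(293/1000)<0$. Each value is the truncation $P$ plus a tail whose modulus is at most about $7q^6(1+\cdots)$, which is roughly $4\times10^{-3}$ at these points. The values $P(0.29)$ and $P(0.293)$ then need to be computed exactly as rationals and shown to exceed this tail bound in absolute value with the correct signs. If the margin at one endpoint is too thin, we include the $+7q^6$ term in the truncation and bound only the remainder from $9q^{10}$ onward, which is of order $10^{-5}$. The intermediate value theorem then places the real zero $q_0$ in $(29/100,293/1000)$, and in particular $q_0>0$.
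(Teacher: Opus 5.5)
Your proposal is correct and follows the paper's proof essentially step for step: Rouch\'e on $|q|=1/2$ against $P=1-3q-5q^3$, the cubic $|P|^2=\frac{113}{64}+\frac34x+\frac{15}{4}x^2-5x^3$ in $x=\cos\theta$ (whose minimum on $[-1,1]$ is $81/64$ at $x=1$), a geometric tail bound, the product-of-roots argument, and a rational sign change at $29/100$ and $293/1000$ (the paper truncates at $D_6$ there, which is your fallback). The differences are cosmetic: you get reality of the zero from conjugation symmetry rather than from $D(0)>0>D(1/2)$, and in Step~3 the fact that matters is $r<1/2$, giving $|\eta|^2=1/(5r)>2/5>1/4$; the bound $r>0.29$ plays no role.
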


\begin{proof}
Set
\[
 P_0(q):=1-3q-5q^3.
\]
On $|q|=1/2$, write $q=\frac12e^{i\theta}$ and $x=\cos\theta$. A direct calculation gives
\[
 |P_0(q)|^2
 =\frac{113}{64}+\frac34x+\frac{15}{4}x^2-5x^3
 =\frac{81}{64}
 +\frac14(1-x)(20x^2+5x+2).
\]
The quadratic factor is positive on $\mathbb R$, so $|P_0(q)|\geq9/8$.

For the remainder, put $u_k=(2k+1)2^{-T_k}$. For $k\geq3$,
\[
 \frac{u_{k+1}}{u_k}
 =\frac{2k+3}{2k+1}2^{-(k+1)}
 \leq\frac9{112}.
\]
Consequently,
\begin{equation}\label{eq:disc-tail}
 |D(q)-P_0(q)|
 \leq\frac{7/64}{1-9/112}
 =\frac{49}{412}<\frac98.
\end{equation}
Rouch\'e's theorem shows that $D$ and $P_0$ have the same number of zeros in $|q|<1/2$.

The polynomial $5q^3+3q-1$ is strictly increasing on the real line and has a single real root $\xi$ with $0<\xi<1/2$. Its other roots are a conjugate pair $\eta,\overline\eta$. Since the product of all three roots is $1/5$,
\[
 |\eta|^2=\frac1{5\xi}>\frac25>\frac14.
\]
Thus $P_0$ has exactly one zero in the disc $|q|<1/2$, and so does $D$. Moreover,
\[
 D(0)=1,\qquad
 D(1/2)\leq-\frac98+\frac{49}{412}<0.
\]
The unique zero is therefore real and positive. Since the Rouch\'e count includes multiplicity, it is simple.

The exact rational endpoint calculation in
Appendix~\ref{sec:certificates} proves \eqref{eq:q0interval}.  In
particular, it gives opposite signs at the two rational endpoints, with
a rational tail bound that is smaller than both certified margins.
\end{proof}

\begin{corollary}\label{cor:residues}
For every $n\geq0$,
\begin{equation}\label{eq:general-residue}
 \Res_{q=q_0}W_n(q)=\frac{N_n(q_0)}{D'(q_0)}.
\end{equation}
In particular,
\begin{equation}\label{eq:W1residue}
 N_1(q_0)=8q_0D'(q_0)\ne0,
 \qquad
 \Res_{q=q_0}W_1(q)=8q_0.
\end{equation}
\end{corollary}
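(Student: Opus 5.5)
Since $D$ has a simple zero at $q_0$ (Theorem~\ref{thm:dominant-disc}) and $N_n$ is holomorphic in $|q|<1$, the quotient $W_n=N_n/D$ has at most a simple pole at $q_0$. Its residue is obtained by writing $D(q)=D'(q_0)(q-q_0)+O((q-q_0)^2)$ with $D'(q_0)\ne0$. This gives \eqref{eq:general-residue} directly. The formula should be read as a residue in the usual sense, and it equals zero exactly when $N_n(q_0)=0$.

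For \eqref{eq:W1residue}, I would use the identity \eqref{eq:thetaNidentity} with $n=0$, namely $\Theta D=N_1-D$, i.e.
\[
 N_1(q)=D(q)+8qD'(q).
\]
This holds analytically on $|q|<1$ because termwise differentiation of an absolutely convergent power series is legitimate there. Evaluating at $q=q_0$ and using $D(q_0)=0$ gives $N_1(q_0)=8q_0D'(q_0)$. This is nonzero because $q_0>29/100>0$ by \eqref{eq:q0interval} and $D'(q_0)\ne0$ by simplicity of the zero.

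Substituting into \eqref{eq:general-residue} with $n=1$ gives $\Res_{q=q_0}W_1=8q_0$. There is no real obstacle here. The only point needing care is the nonvanishing of $D'(q_0)$, and that is exactly the simplicity already proved through the Rouch\'e count.
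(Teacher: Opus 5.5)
Your proposal is correct and follows the paper's proof essentially step for step. You use the local expansion $D(q)=D'(q_0)(q-q_0)+O((q-q_0)^2)$ at the simple zero, then evaluate the identity $N_1=D+8qD'$ from \eqref{eq:thetaNidentity} at $q_0$, using $q_0>0$ and $D'(q_0)\ne0$.
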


\begin{proof}
Since $q_0$ is a simple zero,
\[
 D(q)=D'(q_0)(q-q_0)+O\bigl((q-q_0)^2\bigr),
\]
which proves \eqref{eq:general-residue}.  Identity
\eqref{eq:thetaNidentity} with $n=0$ gives
\[
 N_1(q)=D(q)+8qD'(q),
\]
so evaluation at $q_0$ proves the first identity in
\eqref{eq:W1residue}; it is nonzero because $q_0>0$ and the zero is
simple.  The special residue follows from \eqref{eq:general-residue}.
\end{proof}

The dominant-disc theorem also makes $q_0$ a dominant singularity for the
reciprocal denominator and for every nonremovable quotient.  Define
\begin{equation}\label{eq:defRstar}
 R_*:=\min\left\{1,
 \inf\bigl\{|z|:|z|<1,\ D(z)=0,\ z\ne q_0\bigr\}\right\},
 \qquad \inf\varnothing:=+\infty.
\end{equation}
Then $R_*\in(1/2,1]$.  On $|q|=1/2$, \eqref{eq:disc-tail} and the lower
bound for $P_0$ give
\[
 |D(q)|\geq |P_0(q)|-|D(q)-P_0(q)|
 \geq \frac98-\frac{49}{412}>0.
\]
Uniform continuity therefore gives a zero-free annulus about this circle.
Together with Theorem~\ref{thm:dominant-disc}, this proves $R_*>1/2$ and
shows that $q_0$ is the only zero of $D$ in $|q|<R_*$. 

\begin{theorem}\label{thm:coeffasymptotic}
Let
\[
 \frac1{D(q)}=\sum_{r\geq0}\beta(r)q^r.
\]
There are functions $H_D$ and $H_{W,n}$, for $n\geq0$, holomorphic in
$|q|<R_*$, such that
\begin{equation}\label{eq:local-decompositions}
 \frac1{D(q)}=\frac1{D'(q_0)(q-q_0)}+H_D(q),
 \qquad
 W_n(q)=\frac{N_n(q_0)}{D'(q_0)(q-q_0)}+H_{W,n}(q).
\end{equation}
For every fixed $R$ with $1/2<R<R_*$,
\begin{equation}\label{eq:betaasymptotic}
 \beta(r)
 =
 -\frac{q_0^{-r}}{q_0D'(q_0)}+O_R(R^{-r}).
\end{equation}
For every such $R$ and every fixed $n\geq0$,
\begin{equation}\label{eq:Wnasymptotic}
[q^r]W_n(q)
=
 -\frac{N_n(q_0)}{q_0D'(q_0)}q_0^{-r}+O_{n,R}(R^{-r}).
\end{equation}
In particular, $\beta(r)\ne0$ for all sufficiently large $r$, and
\begin{equation}\label{eq:ratiolimits}
 \frac{[q^r]W_n}{\beta(r)}\longrightarrow N_n(q_0).
\end{equation}
The principal part of $W_n$ at $q_0$ is nonzero if and only if
$N_n(q_0)\ne0$.  Under this hypothesis, $[q^r]W_n(q)\ne0$ for all
sufficiently large $r$, and
\begin{equation}\label{eq:consecutive-ratio}
 \frac{[q^{r+1}]W_n}{[q^r]W_n}\longrightarrow q_0^{-1}.
\end{equation}
For $n=1$,
\begin{equation}\label{eq:W1asymptotic}
 [q^r]W_1(q)=-8q_0^{-r}+O_R(R^{-r}).
\end{equation}
\end{theorem}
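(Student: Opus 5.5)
The plan is a standard singularity analysis, using the fact established just before the statement: $q_0$ is the only zero of $D$ in $|q|<R_*$, and it is simple by Theorem~\ref{thm:dominant-disc}. Both $1/D$ and $W_n=N_n/D$ are meromorphic in $|q|<R_*$, with at most a simple pole at $q_0$, because $N_n$ is holomorphic in $|q|<1$. Subtracting the principal parts, which Corollary~\ref{cor:residues} gives as $1/(D'(q_0)(q-q_0))$ and $N_n(q_0)/(D'(q_0)(q-q_0))$, leaves functions $H_D$ and $H_{W,n}$ with removable singularities at $q_0$. These extend holomorphically to $|q|<R_*$, which proves \eqref{eq:local-decompositions}.

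For the coefficients, expand the principal part geometrically:
\[
\frac{1}{q-q_0}=-\frac{1}{q_0}\sum_{r\ge0}q_0^{-r}q^r,
\]
which converges for $|q|<q_0$. This produces the main terms in \eqref{eq:betaasymptotic} and \eqref{eq:Wnasymptotic}. Now fix $R$ with $1/2<R<R_*$. Since $H_D$ and $H_{W,n}$ are holomorphic on a neighbourhood of the closed disc $|q|\le R$, Cauchy's estimate gives
\[
|[q^r]H|\le \max_{|q|=R}|H|\cdot R^{-r},
\]
which is the stated error term; the constant depends on $R$, and also on $n$ for $H_{W,n}$.

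The remaining claims follow from this. Because $q_0<293/1000<1/2<R$, the main term $q_0^{-r}$ dominates $R^{-r}$ exponentially, and $D'(q_0)\ne0$, so $\beta(r)\ne0$ for large $r$. Dividing \eqref{eq:Wnasymptotic} by \eqref{eq:betaasymptotic} gives \eqref{eq:ratiolimits}, since the ratio equals
\[
\frac{N_n(q_0)+O\bigl((q_0/R)^r\bigr)}{1+O\bigl((q_0/R)^r\bigr)}.
\]
The principal part is nonzero exactly when its residue $N_n(q_0)/D'(q_0)$ is nonzero, that is, when $N_n(q_0)\ne0$. Under this hypothesis the leading coefficient in \eqref{eq:Wnasymptotic} is nonzero, so the coefficients are eventually nonzero, and the ratio of consecutive coefficients tends to $q_0^{-1}$ by the same exponential domination. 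For $n=1$, Corollary~\ref{cor:residues} gives $N_1(q_0)=8q_0D'(q_0)$, so the constant $-N_1(q_0)/(q_0D'(q_0))$ equals $-8$, which is \eqref{eq:W1asymptotic}.

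The only delicate point is that the error bound must be uniform on a circle of radius $R>q_0$ lying inside the zero-free region. This is where $R_*>1/2$ is used: it guarantees that $H$ is holomorphic on a closed disc of some radius $R>1/2>q_0$. The zero-free annulus around $|q|=1/2$ established before the statement supplies exactly this. Everything else is routine.
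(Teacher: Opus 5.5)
Your proposal is correct and follows the paper's proof essentially step for step. You subtract the simple-pole principal part at $q_0$ to obtain $H_D$ and $H_{W,n}$, apply Cauchy's estimate on $|q|=R$, expand the principal part geometrically, and deduce eventual nonvanishing and both ratio limits from the relative error $O\bigl((q_0/R)^r\bigr)$; the case $n=1$ follows from $N_1(q_0)=8q_0D'(q_0)$ in Corollary~\ref{cor:residues}.
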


\begin{proof}
The zero $q_0$ is simple and is the only zero of $D$ in $|q|<R_*$.  Thus
\[
 \frac1{D(q)}-\frac1{D'(q_0)(q-q_0)}
\]
has a removable singularity at $q_0$ and defines $H_D$, holomorphic in
$|q|<R_*$.  Applying the same argument to
\[
 \frac{N_n(q)}{D(q)}
 -\frac{N_n(q_0)}{D'(q_0)(q-q_0)}
\]
defines $H_{W,n}$.

Fix $R$ with $1/2<R<R_*$.  Cauchy's estimate on $|q|=R$ gives
\[
 [q^r]H_D(q)=O_R(R^{-r}),
 \qquad
 [q^r]H_{W,n}(q)=O_{n,R}(R^{-r}).
\]
Since
\[
 \frac1{D'(q_0)(q-q_0)}
 =-\frac1{q_0D'(q_0)}
 \sum_{r\geq0}q_0^{-r}q^r,
\]
the two coefficient estimates follow.  Because $q_0<R$ and
$D'(q_0)\ne0$, \eqref{eq:betaasymptotic} may be written
\[
 \beta(r)=-\frac{q_0^{-r}}{q_0D'(q_0)}
 \left(1+O_R\bigl((q_0/R)^r\bigr)\right).
\]
It follows that $\beta(r)$ is eventually nonzero.  Dividing
\eqref{eq:Wnasymptotic} by \eqref{eq:betaasymptotic} proves
\eqref{eq:ratiolimits}, including when $N_n(q_0)=0$.  If
$N_n(q_0)\ne0$, the same relative-error estimate shows that the
coefficients are eventually nonzero and proves
\eqref{eq:consecutive-ratio}.  Finally,
Corollary~\ref{cor:residues} gives \eqref{eq:W1asymptotic}.
\end{proof}

The nonremovable pole at the point $\tau_0\in\Hh$ determined by
$e^{2\pi i\tau_0}=q_0$ excludes $W_1(e^{2\pi i\tau})$
from being a holomorphic modular form, a holomorphic quasimodular form, or
a quotient of finite eta products; no conclusion is drawn about
meromorphic, vector-valued, or quantum-modular behaviour.

\section{Positive Euler exponents}\label{sec:euler}

The Euler transform is a standard formal operation
\cite{AndrewsPartitions,Cameron}.  Recursive cancellation of successive
coefficients gives a unique sequence $(\kappa_r)_{r\geq1}$ of integers
such that
\begin{equation}\label{eq:Eulerproduct}
 D(q)=\prod_{r\geq1}(1-q^r)^{\kappa_r}
\end{equation}
as a formal power series.  Indeed, after $\kappa_1,\ldots,\kappa_{r-1}$
have been chosen, the coefficient of $q^r$ in
$D(q)\prod_{j<r}(1-q^j)^{-\kappa_j}$ is an integer and uniquely
determines $\kappa_r$.  Put
\begin{equation}\label{eq:defell}
 \ell_r:=-[q^r]\frac{qD'(q)}{D(q)},
\end{equation}
where $\mu$ below denotes the M\"obius function.  Logarithmic
differentiation and M\"obius inversion give
\begin{equation}\label{eq:ellcm}
 \ell_r=\sum_{d\mid r}d\kappa_d,
 \qquad
 r\kappa_r=\sum_{d\mid r}\mu(r/d)\ell_d.
\end{equation}

\begin{theorem}\label{thm:Eulerpositive}
For every $r\geq1$,
\begin{equation}\label{eq:cpositive}
 \kappa_r>0.
\end{equation}
The first values are
recorded in Appendix~\ref{sec:certificates}.  For every $R$ with
$1/2<R<R_*$ and every $\rho$ satisfying
\[
 \max\{R^{-1},q_0^{-1/2}\}<\rho<2,
\]
one has
\begin{equation}\label{eq:casymptotic}
 \kappa_r=\frac{q_0^{-r}}r
 +O_{R,\rho}\left(\frac{\rho^r}{r}\right).
\end{equation}
\end{theorem}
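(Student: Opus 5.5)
The plan is to first get an asymptotic for $\ell_r$ from the dominant zero, then transfer it to $\kappa_r$ by M\"obius inversion, and finally cover the finitely many small $r$ by exact computation.

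\textbf{Step 1: asymptotic for $\ell_r$.} The function $qD'(q)/D(q)$ is meromorphic in $|q|<R_*$ with a single simple pole, at $q_0$, of residue $q_0$. Hence
\[
 -\frac{qD'(q)}{D(q)}=\frac{q_0}{q_0-q}+H(q)=\sum_{r\ge0}q_0^{-r}q^r+H(q),
\]
with $H$ holomorphic in $|q|<R_*$. This is the same mechanism as Theorem~\ref{thm:coeffasymptotic}, and Cauchy's estimate on $|q|=R$ gives
\[
 \ell_r=q_0^{-r}+O_R(R^{-r}).
\]

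\textbf{Step 2: M\"obius inversion.} By \eqref{eq:ellcm}, $r\kappa_r=\ell_r+\sum_{d\mid r,\,d<r}\mu(r/d)\ell_d$.

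The proper divisors satisfy $d\le r/2$. Each $|\ell_d|$ is bounded by $C q_0^{-d}$, since $R^{-1}<2<q_0^{-1}$ and so the main term dominates. There are at most $r$ such divisors, so the divisor sum is $O(r\,q_0^{-r/2})$.

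Since $q_0^{-1/2}<\rho$, this is $O_\rho(\rho^r)$, the extra factor $r$ being absorbed by the strict inequality. The error $R^{-r}$ from Step 1 is also $O(\rho^r)$ because $R^{-1}<\rho$. Dividing by $r$ gives \eqref{eq:casymptotic}. Note that $\rho<2<q_0^{-1}$, since $q_0<293/1000<1/2$, so the main term genuinely dominates.

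\textbf{Step 3: positivity.} Positivity $\kappa_r>0$ follows from Step 2 for $r\ge r_0$. The main obstacle is making $r_0$ explicit, because $R_*$ and the $O_R$ constant are implicit. For this I would use the explicit disc bound instead.

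On $|q|=1/2$ the proof of Theorem~\ref{thm:dominant-disc} gives
\[
 |D|\ge \tfrac98-\tfrac{49}{412},
\]
and $|qD'(q)|$ can be bounded by the explicit series $\sum_k T_k(2k+1)2^{-T_k}$. The contour integral of $H(q)q^{-r-1}$ over $|q|=1/2$ then gives the explicit bound
\[
 |\ell_r-q_0^{-r}|\le K\,2^r ,
\]
where $K$ is a rational constant. It is computed from the maximum of $|qD'/D|$ on the circle plus the maximum of $|q_0/(q_0-q)|$, the latter using $q_0<293/1000$.

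Combining this with $q_0^{-1}>1000/293\approx3.41$ gives an explicit bound:
\[
 r\kappa_r\ \ge\ q_0^{-r}-K2^r-\sum_{d\le r/2}\bigl(q_0^{-d}+K2^d\bigr),
\]
and the right side is positive once $r$ exceeds an explicit $r_0$, of order a few dozen. For $r<r_0$, positivity is verified from the exact integer values of $\kappa_r$ computed by the recursive cancellation (equivalently from exact $\ell_r$ via \eqref{eq:ellcm}), recorded in Appendix~\ref{sec:certificates}.

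The delicate point is keeping $K$ small enough that $r_0$ stays within the tabulated range. If the crude bound is too weak, I would refine it by subtracting the polynomial part $P_0$ more accurately, or by using a circle of radius slightly above $1/2$ on which a Rouch\'e lower bound for $|D|$ still holds.
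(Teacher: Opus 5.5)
Your proposal is correct and takes essentially the paper's route. You subtract the principal part at $q_0$ and apply Cauchy's estimate on $|q|=1/2$ to get $|\ell_r-q_0^{-r}|\le K2^r$, then bound the proper-divisor terms in the M\"obius inversion and cover small $r$ by the exact table; the asymptotic comes from Theorem~\ref{thm:coeffasymptotic} plus M\"obius inversion, exactly as in the paper. The only difference is cosmetic: you bound $qD'/D$ directly, which gives $K\approx 5.5$, whereas the paper bounds $W_1=N_1/D$ using $|N_1|<37$ and takes $K=7$. Your worry that $r_0$ might be ``a few dozen'' is unfounded, since the paper's $K=7$ already yields $r\kappa_r>\tfrac34 c_0^r$ for every $r\geq8$, so the tabulated range $1\le r\le 8$ suffices.
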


\begin{proof}
The function
\[
 H(q):=W_1(q)-\frac{8q_0}{q-q_0}
\]
is holomorphic on a neighbourhood of $|q|\leq1/2$.  The exact boundary
arithmetic in Appendix~\ref{sec:certificates} proves
$|H(q)|<49$ on $|q|=1/2$.
Since
\[
 \frac{8q_0}{q-q_0}=-8\sum_{j\geq0}q_0^{-j}q^j,
\]
Cauchy's estimate and $W_1=1+8qD'/D$ give, for $r\geq1$,
\[
 [q^r]H(q)=8\bigl(q_0^{-r}-\ell_r\bigr),
 \qquad
 8|\ell_r-q_0^{-r}|<49\cdot2^r<56\cdot2^r.
\]
Therefore
\begin{equation}\label{eq:ellbound}
 |\ell_r-q_0^{-r}|<7\cdot2^r
 \qquad(r\geq1).
\end{equation}

Put
\[
 c_0:=\frac{1000}{293},
 \qquad
 c_1:=\frac{100}{29}.
\]
The certified interval implies
$c_0<q_0^{-1}<c_1$.  For $r\geq8$, every proper divisor
$d$ of $r$ satisfies $d\leq r/2$, and there are at most $r$ such
divisors.  Formula \eqref{eq:ellcm} and
\eqref{eq:ellbound} therefore give
\begin{align}
 r\kappa_r
 &\geq c_0^r-7\cdot2^r
 -\sum_{\substack{d\mid r\\d<r}}
 \bigl(c_1^d+7\cdot2^d\bigr)\notag\\
 &\geq c_0^r-7\cdot2^r
 -r\bigl(c_1^{r/2}+7\cdot2^{r/2}\bigr).
 \label{eq:kappa-lower}
\end{align}
The rational inequalities
\[
 c_1<\left(\frac{15}{8}\right)^2,
 \qquad
 2<\left(\frac32\right)^2
\]
show that, after division by $c_0^r$, the three error terms in
\eqref{eq:kappa-lower} are bounded by
\[
 E_1(r):=7\left(\frac{293}{500}\right)^r,
 \quad
 E_2(r):=r\left(\frac{879}{1600}\right)^r,
 \quad
 E_3(r):=7r\left(\frac{879}{2000}\right)^r.
\]
Appendix~\ref{sec:certificates} proves exactly that
\[
 E_1(8)<\frac1{10},
 \qquad
 E_2(8)<\frac1{15},
 \qquad
 E_3(8)<\frac1{12},
\]
and that their successive ratios for $r\geq8$ are bounded by
$3/5$, $5/8$, and $1/2$, respectively.  Hence their sum remains below
$1/4$, and
\begin{equation}\label{eq:kappa-positive-bound}
 r\kappa_r>\frac34c_0^r>0
 \qquad(r\geq8).
\end{equation}
The exact finite table in the appendix gives positivity for
$1\leq r\leq8$, so positivity holds for every index.

Finally, fix $R$ with $1/2<R<R_*$.  The case $n=1$ of
Theorem~\ref{thm:coeffasymptotic} and the identity
$W_1=1+8qD'/D$ give
\[
 \ell_r=q_0^{-r}+O_R(R^{-r}).
\]
Every proper divisor $d$ of $r$ satisfies $d\leq r/2$, and there are at
most $r$ such divisors.  Since $q_0<R$, the preceding estimate also gives
$|\ell_d|\ll_R q_0^{-d}$.  M\"obius inversion therefore yields
\[
 r\kappa_r
 =q_0^{-r}+O_R(R^{-r})
 +O_R\bigl(rq_0^{-r/2}\bigr).
\]
The certified lower bound $q_0>29/100>1/4$ and the inequality
$R>1/2$ show that
$\max\{R^{-1},q_0^{-1/2}\}<2$.  If $\rho$ lies strictly between this
maximum and $2$, then $R^{-r}=O(\rho^r)$ and
$rq_0^{-r/2}=O_\rho(\rho^r)$.  Division by $r$ proves
\eqref{eq:casymptotic}.
\end{proof}

Equivalently, the reciprocal product
\[
 \frac1{D(q)}=\prod_{r\geq1}(1-q^r)^{-\kappa_r}
\]
formally enumerates partitions in which a part of size $r$ has $\kappa_r$
abstract colors; no intrinsic coloring model or bijection is claimed.

\begin{corollary}\label{cor:Lambert}
One has the formal Lambert-series identity
\begin{equation}\label{eq:Lambert}
 W_1(q)
 =1-8\sum_{r\geq1}\frac{r\kappa_rq^r}{1-q^r}.
\end{equation}
Consequently,
\begin{equation}\label{eq:W1negative}
 [q^m]W_1(q)=-8\sum_{r\mid m}r\kappa_r<0
 \qquad(m\geq1),
\end{equation}
and
\begin{equation}\label{eq:3rc}
 3\mid r\kappa_r
 \qquad(r\geq1).
\end{equation}
\end{corollary}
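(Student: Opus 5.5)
The plan has three steps: derive the Lambert identity by logarithmic differentiation of the Euler product, read off the coefficients, and get the divisibility by $3$ from a congruence on $\ell_r$ plus M\"obius inversion.

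\textbf{The Lambert identity \eqref{eq:Lambert}.} Logarithmic differentiation of \eqref{eq:Eulerproduct} gives, as formal power series,
\[
 \frac{qD'(q)}{D(q)}=-\sum_{r\geq1}\frac{r\kappa_rq^r}{1-q^r}.
\]
This is the same computation that produced \eqref{eq:ellcm}. Combining it with \eqref{eq:thetaNidentity} at $n=0$, which reads $N_1=D+8qD'$, gives
\[
 W_1=\frac{N_1}{D}=1+8\frac{qD'}{D}=1-8\sum_{r\geq1}\frac{r\kappa_rq^r}{1-q^r},
\]
which is \eqref{eq:Lambert}.

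\textbf{Coefficients and \eqref{eq:W1negative}.} Expanding each $q^r/(1-q^r)$ as $\sum_{j\geq1}q^{jr}$ gives
\[
 [q^m]W_1=-8\sum_{r\mid m}r\kappa_r=-8\ell_m .
\]
Strict negativity then follows at once from Theorem~\ref{thm:Eulerpositive}: every $\kappa_r$ is positive, and the divisor sum always contains the term $r=1$.

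\textbf{The divisibility \eqref{eq:3rc}.} By \eqref{eq:ellcm} it suffices to prove $3\mid\ell_r$ for every $r\geq1$. Indeed, M\"obius inversion expresses $r\kappa_r$ as an integer combination of the values $\ell_d$ with $d\mid r$, so divisibility of all $\ell_d$ by $3$ transfers to $r\kappa_r$.

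To get $3\mid\ell_r$, write $\ell_r=-[q^r]\,qD'/D$. Here $1/D\in\Z[[q]]$, so it suffices to show that $qD'(q)$ has all coefficients divisible by $3$. The coefficients of $qD'$ are $T_k(-1)^{T_k}(2k+1)$, from $qD'=\sum_k(-1)^{T_k}(2k+1)T_kq^{T_k}$. Checking $k$ modulo $3$:
\begin{itemize}
 \item if $k\equiv0$ or $k\equiv2\pmod3$, then $3\mid T_k=k(k+1)/2$;
 \item if $k\equiv1\pmod3$, then $3\mid 2k+1$.
\end{itemize}
Hence $3\mid T_k(2k+1)$ for every $k$, so $qD'\in3\Z[[q]]$. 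It follows that $qD'/D\in3\Z[[q]]$, hence $3\mid\ell_r$ for all $r$, and \eqref{eq:3rc} follows.

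No step is a real obstacle. The only point needing care is the mod-$3$ observation on $T_k(2k+1)$, and making sure the inversion goes through $\ell$ rather than through $\kappa$ directly.
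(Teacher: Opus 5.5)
Your proof is correct and follows the paper's argument: logarithmic differentiation of the Euler product together with $W_1=1+8qD'/D$, positivity of the $\kappa_r$ from Theorem~\ref{thm:Eulerpositive}, and M\"obius inversion applied to $3\mid\ell_r$. The only difference is how you get $3\mid\ell_r$. The paper cites Theorem~\ref{thm:content} with $n=1$, i.e.\ $W_1\equiv1\pmod{24}$, whereas you check $3\mid T_k(2k+1)$ directly; this is the same fact, since $m^3-m=8T_k(2k+1)$ for $m=2k+1$ (the paper also remarks that the identities hold analytically for $|q|<q_0$, which the formal statement does not require).
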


\begin{proof}
The relation $W_1=1+8qD'/D$ and logarithmic differentiation of
\eqref{eq:Eulerproduct} prove \eqref{eq:Lambert}; positivity of the
$\kappa_r$ gives \eqref{eq:W1negative}.  Theorem~\ref{thm:content} with
$n=1$ says $W_1\equiv1\pmod{24}$.  Hence every
$\ell_m=-[q^m]W_1/8$ is divisible by $3$.  M\"obius inversion in
\eqref{eq:ellcm} proves \eqref{eq:3rc}.  These arguments first take place
in $\Z[[q]]$.  By \eqref{eq:casymptotic}, the logarithmic series for
\eqref{eq:Eulerproduct} and the Lambert series in \eqref{eq:Lambert}
converge normally on compact subsets of $|q|<q_0$; hence the same formulas
hold analytically there.  No convergence on the whole unit disc or finite
eta-product representation is asserted.
\end{proof}

\section{Further questions}

The exact Fourier closure and completed functional equation determine the
root-of-unity radial behaviour of the mod-eight quotient family.  The same
common denominator links its coefficientwise arithmetic, dominant
singularity, and positive Euler exponents.

Two precise questions remain.  The first is whether
\begin{equation}\label{eq:problem-nonvanishing}
 N_n(q_0)\ne0\qquad(n\geq2).
\end{equation}
The case $n=1$ is already proved, since
$N_1(q_0)=8q_0D'(q_0)\ne0$.  A positive answer to
\eqref{eq:problem-nonvanishing} would make the consecutive-coefficient
limit \eqref{eq:consecutive-ratio} valid for every positive moment.  The
second question is to certify the zeros following $q_0$ and their
multiplicities on a larger zero-free contour.  Their principal parts would
give the next terms in the coefficient expansions and, for a nonreal
conjugate pair, the first oscillatory correction.

\appendix

\section{Exact rational certificates}\label{sec:certificates}

We record the integer and rational inequalities used in
Theorems~\ref{thm:dominant-disc} and \ref{thm:Eulerpositive}.  They make the
finite calculations reproducible without floating-point input.

\subsection{The real zero}

Let
\[
 D_6(q):=1-3q-5q^3+7q^6+9q^{10}-11q^{15},
 \qquad Q:=\frac3{10}.
\]
Direct rational arithmetic gives the two exact margins
\begin{equation}\label{eq:exact-endpoint-margins}
 D_6\left(\frac{29}{100}\right)>\frac1{125},
 \qquad
 D_6\left(\frac{293}{1000}\right)<-\frac1{4000}.
\end{equation}

For $k\geq6$ and $0\leq q\leq Q$, the ratio of consecutive absolute
terms in the omitted tail is at most
\[
 \frac{15}{13}\left(\frac3{10}\right)^7
 =\frac{32805}{130000000}<\frac1{3000}.
\]
Consequently,
\begin{equation}\label{eq:exact-D6-tail}
 |D(q)-D_6(q)|
 \leq\frac{13(3/10)^{21}}{1-1/3000}
 <26\left(\frac3{10}\right)^{15}<10^{-6}
 \qquad\left(0\leq q\leq\frac3{10}\right).
\end{equation}
Indeed, the first strict inequality uses
$1-1/3000>1/2$ and $(3/10)^{21}<(3/10)^{15}$, while the last is the
integer comparison
\[
 26\cdot3^{15}=373071582<10^9.
\]
Combining the tail bound with \eqref{eq:exact-endpoint-margins} gives
\begin{align*}
 D\left(\frac{29}{100}\right)
 &>\frac1{125}-10^{-6}=\frac{7999}{10^6}>0,\\
 D\left(\frac{293}{1000}\right)
 &<-\frac1{4000}+10^{-6}=-\frac{249}{10^6}<0.
\end{align*}
Together with the Rouch\'e count, this proves \eqref{eq:q0interval}.

\subsection{The Euler exponents}

Put $v_k=(2k+1)^3 2^{-T_k}$.  Direct summation gives
\[
 \sum_{k=0}^{6}v_k=\frac{75996501}{2097152}<\frac{73}{2}.
\]
For $k\geq6$,
\[
 \frac{v_{k+1}}{v_k}
 \leq\frac{(15/13)^3}{128}
 =\frac{3375}{281216}<\frac1{80}.
\]
Therefore
\begin{equation}\label{eq:exact-N1-circle}
 \sum_{k\geq0}v_k
 <\frac{73}{2}
 +\frac{3375/2^{28}}{1-1/80}<37,
\end{equation}
because
\[
 2\cdot75996501<73\cdot2097152,
 \qquad 540000<79\cdot2^{28}.
\]
Combining \eqref{eq:exact-N1-circle} with
$|D(q)|\geq829/824>1$ gives
\[
 |W_1(q)|<37\qquad(|q|=1/2).
\]
On the same circle, $|q-q_0|\geq1/2-q_0>207/1000$, while
\[
 8q_0<\frac{8\cdot293}{1000}
 <\frac{12\cdot207}{1000}
 <12\left(\frac12-q_0\right),
\]
where $8\cdot293=2344<2484=12\cdot207$.  Hence
\[
 \left|\frac{8q_0}{q-q_0}\right|<12.
\]
Together with \eqref{eq:exact-N1-circle}, this proves the exact boundary
estimate cited in the proof of Theorem~\ref{thm:Eulerpositive}.

For the normalized errors in the proof of
Theorem~\ref{thm:Eulerpositive}, exact evaluation at $r=8$ gives
\begin{align*}
 E_1(8)<\frac1{10}
 &\iff 70\cdot293^8<500^8,\\
 E_2(8)<\frac1{15}
 &\iff 120\cdot879^8<1600^8,\\
 E_3(8)<\frac1{12}
 &\iff 672\cdot879^8<2000^8.
\end{align*}
Each comparison follows by direct integer arithmetic.
For every $r\geq8$, their successive ratios satisfy
\begin{align*}
 \frac{E_1(r+1)}{E_1(r)}
 &=\frac{293}{500}<\frac35,\\
 \frac{E_2(r+1)}{E_2(r)}
 &\leq\frac{7911}{12800}<\frac58,\\
 \frac{E_3(r+1)}{E_3(r)}
 &\leq\frac{7911}{16000}<\frac12.
\end{align*}
Thus their sum is below $1/4$ for every $r\geq8$.

Finally, if $D(q)=\sum_{r\geq0}d_rq^r$, logarithmic differentiation
gives the exact recursion
\[
 \ell_r=-rd_r-\sum_{j=1}^{r-1}d_j\ell_{r-j},
 \qquad
 r\kappa_r=\sum_{d\mid r}\mu(r/d)\ell_d.
\]
The finite exceptional range, with $r=8$ included as an overlap check, is
recorded below.
\[
\begin{array}{c|r|r|r|r}
 r&d_r&\ell_r&r\kappa_r&\kappa_r\\ \hline
 1&-3&3&3&3\\
 2&0&9&6&3\\
 3&-5&42&39&13\\
 4&0&141&132&33\\
 5&0&468&465&93\\
 6&7&1572&1524&254\\
 7&0&5400&5397&771\\
 8&0&18477&18336&2292
\end{array}
\]

\end{document}